\newtheorem{thm}{Theorem}
\newtheorem{Def}{Definition}
\newtheorem{Prop}[thm]{Proposition}
\newtheorem{Cor}[thm]{Corollary}
\newtheorem{Lemma}[thm]{Lemma}
\begin{document}

\thispagestyle{empty}
 \title{A Sequence of Degree One Vassiliev Invariants for Virtual Knots}
 \author{Allison Henrich}

 \maketitle 
\abstract{
For ordinary knots in $\mathbf{R}^3$, there are no degree one Vassiliev invariants. For virtual knots, however, the space of degree one Vassiliev invariants is infinite dimensional. We introduce a sequence of three degree one Vassiliev invariants of virtual knots of increasing strength. We demonstrate that the strongest invariant is a universal Vassiliev invariant of degree one for virtual knots in the sense that any other degree one Vassiliev invariant can be recovered from it by a certain natural construction. To prove these results, we extend the based matrix invariant introduced by Turaev for virtual strings to the class of singular flat virtual knots with one double-point.}

\markright{Allison Henrich} \vspace{.2cm}    
\section{Introduction}
\subsection{Virtual knots}

One way of representing a knot in $\mathbf{R}^3$ is using a knot diagram, i.e. a closed curve in the plane whose only singularities are a finite number of transversal double-points. These double-points are decorated to indicate which arc represents the over-strand and which represents the under-strand of the knot. Clearly, many different diagrams represent the same knot. Two knot diagrams represent the same knot if and only if they can be related by a sequence of Reidemeister moves~\cite{Reid}, as pictured in Figure~\ref{ReidMoves}. A similar description is used to describe diagrammatic equivalence of links. A more thorough introduction to knots and links can be found in~\cite{Adams}. 

\begin{figure}[h]
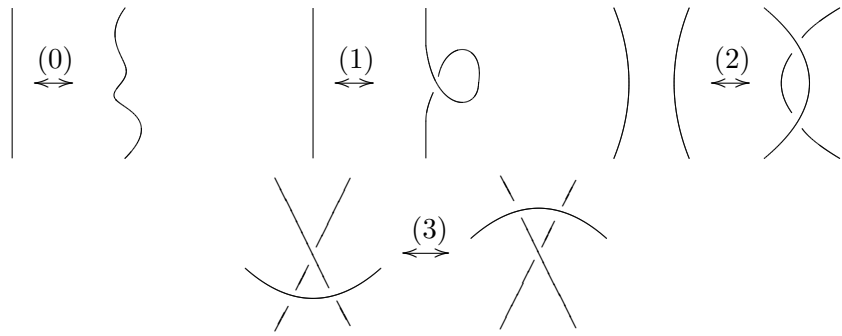

\[
\xy
(-50,-10)*{}; (-50,10)*{} **\dir{-};
{\ar@{<->}(-47,0)*{}; (-42,0)*{}}; ?(.75)*dir{>}+(-1, 3)*{(0)};
(-35,-10)*{}="A";
(-35,10)*{}="B";
"A"; "B" **\crv{(-30,-5) & (-40,-2) & (-32,1) & (-38,6)};
(-10,-10)*{}; (-10,10)*{} **\dir{-};
{\ar@{<->}(-7,0)*{}; (-2,0)*{}}; ?(.75)*dir{>}+(-1, 3)*{(1)};
(5,10)*{}="C";
(5, -10)*{}="D";
(5,5)*{}="C'";
(5,-5)*{}="D'";
"C"; "C'" **\dir{-};
"D"; "D'" **\dir{-};
(8,0)*{}="MB";
(12,0)*{}="LB";
"C'"; "LB" **\crv{(6, -4) & (12,-4)}; \POS?(.25)*{\hole}="2z";
"LB"; "2z" **\crv{(13, 6) & (7, 6)};
"2z"; "D'" **\crv{(5,-3)};
(30,10)*{}="E";
(30, -10)*{}="F";
"E"; "F" **\crv{(34, 0)};
(40,10)*{}="G";
(40, -10)*{}="H";
"G"; "H" **\crv{(36, 0)};
{\ar@{<->}(43,0)*{}; (48,0)*{}}; ?(.75)*dir{>}+(-1, 3)*{(2)};
(50,10)*{}="I";
(50, -10)*{}="J";
"I"; "J" **\crv{(62,0)}; \POS?(.25)*{\hole}="2x"; \POS?(.75)*{\hole}="2y";
(60,10)*{}="K";
(60, -10)*{}="L";
"K"; "2x" **\crv{(55,7)};
"2x"; "2y" **\crv{(50, 0)};
"2y"; "L" **\crv{(55, -7)};
\endxy
\]
\[
\xy
(75,10)*{}="AT";
(85, -10)*{}="AB";
(85, 10)*{}="BT";
(75, -10)*{}="BB";
(71, -2)*{}="CL";
(89, -2)*{}="CR";
"CL"; "CR" **\crv{(80, -10)}; \POS?(.35)*{\hole}="a"; \POS?(.65)*{\hole}="b"; 
"AT"; "b" **\crv{}; \POS?(.35)*{\hole}="c";
"b"; "AB" **\crv{};
"BB"; "a" **\crv{};
"a"; "c" **\crv{};
"c"; "BT" **\crv{};
{\ar@{<->}(92,0)*{}; (98,0)*{}}; ?(.75)*dir{>}+(-1, 3)*{(3)};
(105,10)*{}="A'T";
(115, -10)*{}="A'B";
(115, 10)*{}="B'T";
(105, -10)*{}="B'B";
(101, 2)*{}="C'L";
(119, 2)*{}="C'R";
"C'L"; "C'R" **\crv{(110, 10)}; \POS?(.35)*{\hole}="a'"; \POS?(.65)*{\hole}="b'"; 
"A'T"; "a'" **\crv{};
"a'"; "A'B" **\crv{};\POS?(.65)*{\hole}="c'";
"B'B"; "c'" **\crv{};
"c'"; "b'" **\crv{};
"b'"; "B'T" **\crv{};
\endxy
\]
\caption{Reidemeister moves}\label{ReidMoves}
\end{figure}

For the purposes of this paper, all knots are assumed to be oriented. It is important to note that there are oriented virtual knots that are distinct from the virtual knots with opposite orientations. (This fact was demonstrated by Sawollek~\cite{Sawollek} using a degree one Vassiliev invariant of virtual knots arising from the Conway polynomial.)

Knot diagrams can be encoded by Gauss diagrams, see~\cite{PolyakViro}. A Gauss diagram is an oriented circle parametrizing the knot equipped with signed arrows. The arrows represent crossings in the knot diagram, while the signs together with the direction of the arrows contain information about under- and over-strands. To be more precise, an arrow points from the pre-image of the over-strand of the associated crossing to the pre-image of the under-strand in the parameterizing circle. An arrow is given the sign of its corresponding crossing in the knot diagram. Figure~\ref{Signs} illustrates how these signs are determined in a knot diagram, while Figure~\ref{Gauss} gives an example of a knot diagram and its associated Gauss diagram. 

\begin{figure}[h]
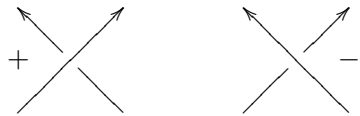

\[
\xy
(-54,-7)*{}; (-40, 7)*{} **\crv{}; \POS?(.5)*{\hole}="y"; ?(1)*\dir{>}+(-14, -7)*{+};
(-40,-7)*{}; "y" **\crv{};
"y"; (-54, 7)*{} **\crv{}; ?(0)*\dir{>}+(-2, -2)*{};
(-10,-7)*{}; (-24, 7)*{} **\crv{}; \POS?(.5)*{\hole}="x"; ?(1)*\dir{>}+(14, -7)*{-};
(-24,-7)*{}; "x" **\crv{};
"x"; (-10, 7)*{} **\crv{}; ?(0)*\dir{>}+(-2, -2)*{};
\endxy
\]
\caption{Sign of a crossing}\label{Signs}
\end{figure}

\def\TrefoilA{\xygraph{!{0;/r2.0pc/:}
!P3"a"{~>{}}
!P9"b"{~:{(1.3288,0):}~>{}}
!P3"c"{~:{(2.5,0):}~>{}}
!{\vover~{"b2"}{"b1"}{"a1"}{"a3"}=>|{1}}
!{"b4";"b2" **\crv{"c1"}}
!{\vover~{"b5"}{"b4"}{"a2"}{"a1"}=>|{3}}
!{"b7";"b5" **\crv{"c2"}}
!{\vover~{"b8"}{"b7"}{"a3"}{"a2"}=>|{2}}
!{"b1";"b8" **\crv{"c3"}}}}

\def\TrefoilGauss{
\begin{xy} /r15mm/:
,{\ellipse<>{}}
,(2,0)="1" ,*+!L{1}
,(.292893, .707107)="3" ,*+!DR{3}
,(1,-1)="2" ,*+!U{2}
,(0,0)="a1" ,*+!LU{+}
,(1.707107, -.707107)="a3" ,*+!D{+}
,(1,1)="a2" ,*+!LU{+}
,{\ar@{->} 0; "1"}
,{\ar@{->} (1,1); (1, -1)}
,{\ar@{<-} (.292893, .707107); (1.707107, -.707107)}
\end{xy}
}

\begin{figure}[h]
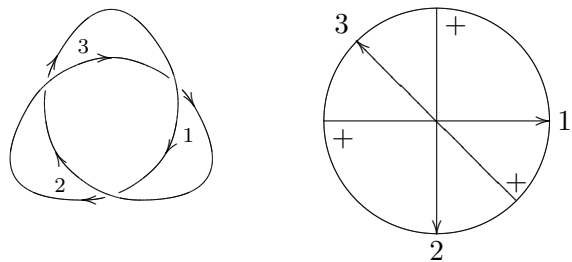

$$\TrefoilA\hspace{10mm}\TrefoilGauss$$
\caption{Example of a knot and its Gauss diagram}\label{Gauss}
\end{figure}

There is a notion of equivalence for Gauss diagrams that corresponds to the equivalence of knot diagrams. In particular, there are combinatorial Reidemeister-type moves that relate Gauss diagrams representing the same knot, see~\cite{PolyakViro}.

Every knot diagram represents an actual knot and can be described by a Gauss diagram, however there exist Gauss diagrams that do not correspond to ordinary knot diagrams. If we allow for an extra type of crossing, called a `virtual crossing' and denoted by an encircled double-point, we can represent any Gauss diagram by a `virtual' knot diagram. These virtual crossings do not give rise to arrows in the Gauss diagram---they are merely an artifact of the virtual knot diagram.  We say that two virtual knot diagrams are \emph{equivalent} if they can be related by a sequence of classical and virtual Reidemeister moves, see Figure~\ref{VirtualReid}. Virtual links are defined similarly. In~\cite{VKT}, Louis Kauffman defined these notions of a virtual knot and a virtual link. He discovered that equivalence classes of virtual knot diagrams can be interpreted as stable equivalence classes of knots in thickened surfaces. This was proven by Carter, Kamada and Saito~\cite{CKS}.

\begin{figure}[h]
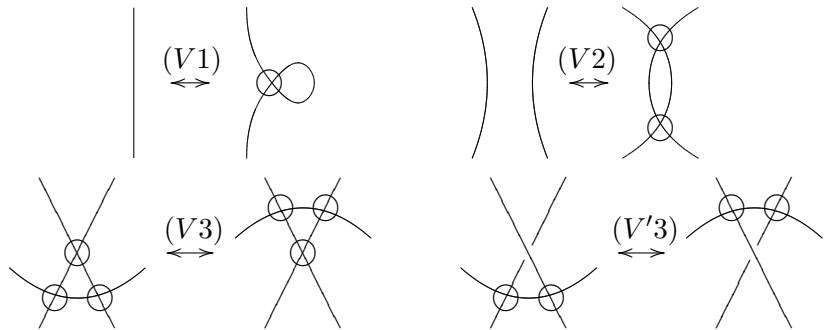

\[
\xy
(-30,-10)*{}; (-30,10)*{} **\dir{-};
{\ar@{<->}(-25,0)*{}; (-20,0)*{}}; ?(.75)*dir{>}+(-1, 3)*{(V1)};
(-15, 10)*{}; (-15, -10)*{} **\crv{(-15,5) & (-12,0) & (-8,-4) & (-5,0) & (-8, 4) & (-12, 0) & (-15, -5)};
(-12,0)*{\bigcirc};
(15,10)*{}="E";
(15, -10)*{}="F";
"E"; "F" **\crv{(19, 0)};
(25,10)*{}="G";
(25, -10)*{}="H";
"G"; "H" **\crv{(21, 0)};
{\ar@{<->}(33,0)*{}; (28,0)*{}}; ?(.75)*dir{>}+(1, 3)*{(V2)};
(35,10)*{}="I";
(35,-10)*{}="J";
(45, 10)*{}="K";
(45, -10)*{}="L";
"I"; "J" **\crv{(40,7) & (43,0) & (40, -7)};
"K"; "L" **\crv{(40, 7) & (37, 0) & (40, -7)};
(40,6)*{\bigcirc};
(40,-6)*{\bigcirc}; 
\endxy
\]
\[
\xy
(75,10)*{}="AT";
(85, -10)*{}="AB";
(85, 10)*{}="BT";
(75, -10)*{}="BB";
(71, -2)*{}="CL";
(89, -2)*{}="CR";
"CL"; "CR" **\crv{(80, -10)};
"AT"; "AB" **\crv{};
"BT"; "BB" **\crv{};
(80,0)*{\bigcirc};
(77,-6)*{\bigcirc};
(83,-6)*{\bigcirc};
{\ar@{<->}(92,0)*{}; (98,0)*{}}; ?(.75)*dir{>}+(-1, 3)*{(V3)};
(105,10)*{}="A'T";
(115, -10)*{}="A'B";
(115, 10)*{}="B'T";
(105, -10)*{}="B'B";
(101, 2)*{}="C'L";
(119, 2)*{}="C'R";
"C'L"; "C'R" **\crv{(110, 10)};
"A'T"; "A'B" **\crv{};
"B'T"; "B'B" **\crv{};
(110,0)*{\bigcirc};
(107,6)*{\bigcirc};
(113,6)*{\bigcirc};
(135,10)*{}="DT";
(145, -10)*{}="DB";
(145, 10)*{}="ET";
(135, -10)*{}="EB";
(131, -2)*{}="FL";
(149, -2)*{}="FR";
"FL"; "FR" **\crv{(140, -10)};
"DT"; "DB" **\crv{}; \POS?(.5)*{\hole}="a";
"ET"; "a" **\crv{};
"a"; "EB" **\crv{};
(137,-6)*{\bigcirc};
(143,-6)*{\bigcirc};
{\ar@{<->}(152,0)*{}; (158,0)*{}}; ?(.75)*dir{>}+(-1, 3)*{(V'3)};
(165,10)*{}="D'T";
(175, -10)*{}="D'B";
(175, 10)*{}="E'T";
(165, -10)*{}="E'B";
(161, 2)*{}="F'L";
(179, 2)*{}="F'R";
"F'L"; "F'R" **\crv{(170, 10)};
"D'T"; "D'B" **\crv{}; \POS?(.5)*{\hole}="a'";
"E'T"; "a'" **\crv{};
"a'"; "E'B" **\crv{};
(167,6)*{\bigcirc};
(173,6)*{\bigcirc};
\endxy
\]
\caption{Virtual Reidemeister moves}\label{VirtualReid}
\end{figure}

\subsection{Virtual homotopy}

Thus far, we have discussed an isotopy equivalence for virtual knots. There is a weaker equivalence relation of virtual knots that is related to the homotopy of curves in thickened surfaces. While homotopy is a trivial notion of equivalence for ordinary knots, it is highly non-trivial for virtual knots.

\begin{Def}  Two virtual knot diagrams are \emph{homotopic} if they are related by a sequence of classical and virtual Reidemeister moves with the additional move that switches the under- and over-strands of a crossing. This additional move is referred to as the \emph{Changing Crossing}, or (CC) move. \end{Def}

The (CC) move exchanges a positive crossing for a negative one and vice versa, and it is equivalent to allowing one strand of a knot to pass through another. We should note that for links, the (CC) move is not permitted when the crossing involves two components. See~\cite{Dye} for more on virtual homotopy of knots and links.  

Any two classical knots  $\mathbf{R}^3$ are clearly homotopic under this definition. As mentioned above, however, there are many non-trivial homotopy classes of virtual knots. A famous virtual knot that is not homotopic to a classical knot is Kishino's knot~\cite{Kishino}, see Figure~\ref{Kishino}.

\begin{figure}[h]
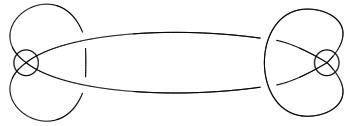

\[
\xy
(-20,0)*{}="a";
(20,0)*{}="b";
"b"; "b" **\crv{(25,5) & (15,10) & (10,0) & (15, -10) & (25,-5)}; 
\POS?(.42)*{\hole}="bt"; \POS?(.58)*{\hole}="bb";
"b"; "bt" **\crv{(15,3)};
"b"; "bb" **\crv{(15,-3)};
"a"; "bt" **\crv{(-15,4) & (0,5)}; \POS?(.333)*{\hole}="at";
"a"; "bb" **\crv{(-15,-4) & (0,-5)}; \POS?(.333)*{\hole}="ab";
"at"; "ab" **\crv{(-12,0)};
"a"; "at" **\crv{(-27,5) & (-15,13)};
"a"; "ab" **\crv{(-27,-5) & (-15,-13)};
(-20,0)*{\bigcirc};
(20,0)*{\bigcirc};
\endxy
\]
\caption{Kishino's knot}\label{Kishino}
\end{figure}

\subsection{Flat virtual knots}

Intuitively, a flat virtual knot diagram is a virtual knot diagram where the under- and over-strand information of each classical crossing is forgotten. Flat virtual knot diagrams are often thought of as shadows of virtual knot diagrams. An example of a flat virtual knot diagram, the shadow of Kishino's knot, is pictured in Figure~\ref{KishinoFlat}.

\begin{figure}[h]
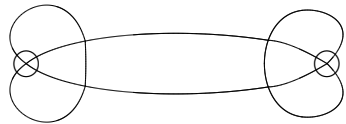

\[
\xy
(-20,0)*{}="a";
(20,0)*{}="b";
"b"; "b" **\crv{(25,5) & (15,10) & (10,0) & (15, -10) & (25,-5)}; 
\POS?(.42)*{}="bt"; \POS?(.58)*{}="bb";
"b"; "bt" **\crv{(15,3)};
"b"; "bb" **\crv{(15,-3)};
"a"; "bt" **\crv{(-15,4) & (0,5)}; \POS?(.333)*{}="at";
"a"; "bb" **\crv{(-15,-4) & (0,-5)}; \POS?(.333)*{}="ab";
"at"; "ab" **\crv{(-12,0)};
"a"; "at" **\crv{(-27,5) & (-15,13)};
"a"; "ab" **\crv{(-27,-5) & (-15,-13)};
(-20,0)*{\bigcirc};
(20,0)*{\bigcirc};
\endxy
\]
\caption{A non-trivial flat virtual knot}\label{KishinoFlat}
\end{figure}

Just as knots are defined as equivalence classes of knot diagrams, flat virtual knots are defined as equivalence classes of flat virtual knot diagrams. Here, the equivalence relation is generated by the shadows of the classical and virtual Reidemeister moves. For pictures of the flat Reidemeister moves, see Figure~\ref{FlatMoves}. We define the flat virtual Reidemeister moves in a similar fashion. (Note that (V'3) is the only virtual move that differs in the flat category.)

\begin{figure}[h]
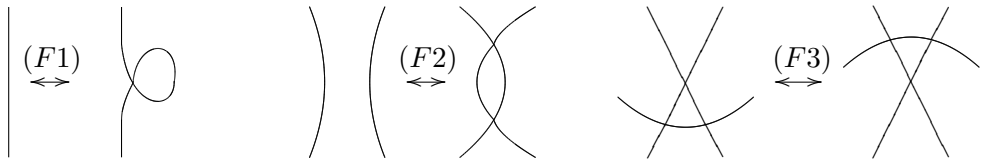

\[
\xy
(-10,-10)*{}; (-10,10)*{} **\dir{-};
{\ar@{<->}(-7,0)*{}; (-2,0)*{}}; ?(.75)*dir{>}+(-1, 3)*{(F1)};
(5,10)*{}="C";
(5, -10)*{}="D";
(5,5)*{}="C'";
(5,-5)*{}="D'";
"C"; "C'" **\dir{-};
"D"; "D'" **\dir{-};
(8,0)*{}="MB";
(12,0)*{}="LB";
"C'"; "LB" **\crv{(6, -4) & (12,-4)}; \POS?(.25)*{}="2z";
"LB"; "2z" **\crv{(13, 6) & (7, 6)};
"2z"; "D'" **\crv{(5,-3)};
(30,10)*{}="E";
(30, -10)*{}="F";
"E"; "F" **\crv{(34, 0)};
(40,10)*{}="G";
(40, -10)*{}="H";
"G"; "H" **\crv{(36, 0)};
{\ar@{<->}(43,0)*{}; (48,0)*{}}; ?(.75)*dir{>}+(-1, 3)*{(F2)};
(50,10)*{}="I";
(50, -10)*{}="J";
"I"; "J" **\crv{(62,0)}; \POS?(.25)*{}="2x"; \POS?(.75)*{}="2y";
(60,10)*{}="K";
(60, -10)*{}="L";
"K"; "2x" **\crv{(55,7)};
"2x"; "2y" **\crv{(50, 0)};
"2y"; "L" **\crv{(55, -7)};
(75,10)*{}="AT";
(85, -10)*{}="AB";
(85, 10)*{}="BT";
(75, -10)*{}="BB";
(71, -2)*{}="CL";
(89, -2)*{}="CR";
"CL"; "CR" **\crv{(80, -10)}; \POS?(.35)*{}="a"; \POS?(.65)*{}="b"; 
"AT"; "b" **\crv{}; \POS?(.35)*{}="c";
"b"; "AB" **\crv{};
"BB"; "a" **\crv{};
"a"; "c" **\crv{};
"c"; "BT" **\crv{};
{\ar@{<->}(92,0)*{}; (98,0)*{}}; ?(.75)*dir{>}+(-1, 3)*{(F3)};
(105,10)*{}="A'T";
(115, -10)*{}="A'B";
(115, 10)*{}="B'T";
(105, -10)*{}="B'B";
(101, 2)*{}="C'L";
(119, 2)*{}="C'R";
"C'L"; "C'R" **\crv{(110, 10)}; \POS?(.35)*{}="a'"; \POS?(.65)*{}="b'"; 
"A'T"; "a'" **\crv{};
"a'"; "A'B" **\crv{};\POS?(.65)*{}="c'";
"B'B"; "c'" **\crv{};
"c'"; "b'" **\crv{};
"b'"; "B'T" **\crv{};
\endxy
\]
\caption{Flat Reidemeister moves}\label{FlatMoves}
\end{figure}

Defined similarly, flat virtual links are equivalence classes of shadows of virtual link diagrams. Any flat virtual link diagram that is the shadow of a classical link diagram is equivalent to a disjoint union of unknots in the flat category. In particular, the shadow of a diagram of a classical knot (possibly containing virtual crossings) is equivalent to the unknot in the flat category. The converse, however, is not true. There are non-classical, non-trivial virtual knots whose shadows are trivial in the flat category.

Because the (CC) virtual homotopy move allows one to switch between positive and negative crossings, it is not hard to see that a flat virtual knot is actually just the homotopy class of a virtual knot. For links, however, the two notions differ. In the category of flat virtual links, (F2) moves are allowed between components. For virtual links, the (CC) move is not permitted when the two strands of a crossing are from different components, so an (F2)-like move isn't always possible. Thus, the notion of homotopy is more rigid than that of flat equivalence.

Flat virtual knots were studied in a paper by Turaev~\cite{Turaev}, who calls them virtual strings. Turaev defines virtual strings as equivalence classes of (unsigned) arrow diagrams. Given a virtual knot diagram and its corresponding Gauss diagram, one can recover the corresponding flat virtual knot diagram by projecting the knot onto its shadow. Meanwhile, the virtual string can be recovered from the Gauss diagram by reversing all negative arrows and dropping the signs. It is easy to describe the relation between a virtual string and its flat virtual knot. In fact, Reidemeister moves for flat virtual knots correspond to moves that may be applied to an arrow diagram of a virtual string. These virtual string moves are related to the equivalence relations on Gauss diagrams via the correspondence mentioned above. We will discuss virtual strings in greater depth in Section~\ref{SBMsection}.

\markright{Allison Henrich} \vspace{.2cm}     

\section{Singular Virtual Knots and Vassiliev Invariants}
\subsection{Singular virtual knots}
We define a sequence of three distinct Vassiliev invariants of virtual knots. The strongest of these invariants is the universal Vassiliev invariant of degree one for virtual knots. 

\begin{Def} \begin{enumerate}
\item A \emph{singular virtual knot (or link)} is a virtual knot (resp. link) with a finite number of \emph{double-points}, i.e. places where the knot (resp. link) has a transverse self-intersection (resp. intersection) point. Equivalently, a singular virtual knot is considered to be an equivalence class of virtual knot (resp. link) diagrams with an additional type of crossing, see Figure~\ref{CrossingTypes}. The equivalence relation on singular virtual knot (resp. link) diagrams is generated by Reidemeister moves, virtual Reidemeister moves, and the moves involving singular double-points given in Figure~\ref{SingEquiv}. 
\item A \emph{flat singular virtual knot} is a singular virtual knot with no distinction made between the two types of classical crossings, positive and negative. Both types of classical crossings are represented in flat singular virtual knot diagrams by their shadows, and are distinct from both virtual and singular crossings. Flat singular virtual knot diagrams are equivalent if and only if they are related by a sequence of flat classical and virtual Reidemeister moves and flat versions of the singularity moves shown in Figure~\ref{SingEquiv}.
\end{enumerate}
\end{Def}

\begin{figure}[h]
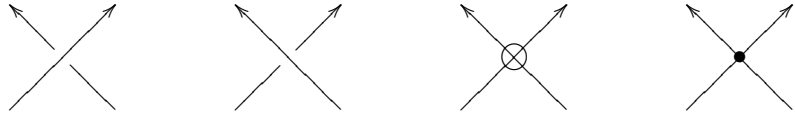

\[
\xy
(-54,-7)*{}; (-40, 7)*{} **\crv{}; \POS?(.5)*{\hole}="y"; ?(1)*\dir{>}+(-14, -7)*{};
(-40,-7)*{}; "y" **\crv{};
"y"; (-54, 7)*{} **\crv{}; ?(0)*\dir{>}+(-2, -2)*{};
(-10,-7)*{}; (-24, 7)*{} **\crv{}; \POS?(.5)*{\hole}="x"; ?(1)*\dir{>}+(-14, -7)*{};
(-24,-7)*{}; "x" **\crv{};
"x"; (-10, 7)*{} **\crv{}; ?(0)*\dir{>}+(-2, -2)*{};
(6,-7)*{}; (20, 7)*{} **\crv{}; ?(0)*\dir{>}+(-2, 3)*{};
(20, -7)*{}; (6, 7)*{} **\crv{}; ?(0)*\dir{>}+(-2, 3)*{};
(13,0)*{\bigcirc};
(36,-7)*{}; (50, 7)*{} **\crv{}; ?(0)*\dir{>}+(-2, 3)*{};
(50, -7)*{}; (36, 7)*{} **\crv{}; ?(0)*\dir{>}+(-2, 3)*{};
(43,0)*{\bullet};
\endxy
\]
\caption{Types of crossings: positive, negative, virtual, and singular}\label{CrossingTypes}
\end{figure}

\begin{figure}[h]
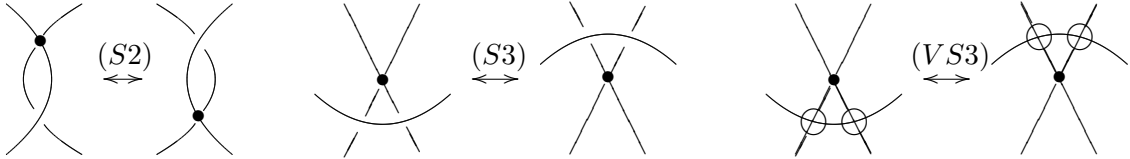

\[
\xy 
(30,10)*{}="E";
(30, -10)*{}="F";
"E"; "F" **\crv{(42,0)}; \POS?(.25)*{\bullet}="2u"; \POS?(.75)*{\hole}="2v";
(40,10)*{}="G";
(40, -10)*{}="H";
"G"; "2u" **\crv{(35,7)};
"2u"; "2v" **\crv{(30, 0)};
"2v"; "H" **\crv{(35, -7)};
{\ar@{<->}(43,0)*{}; (48,0)*{}}; ?(.75)*dir{>}+(-1, 3)*{(S2)};
(50,10)*{}="I";
(50, -10)*{}="J";
(60,10)*{}="K";
(60, -10)*{}="L";
"K"; "L" **\crv{(48,0)}; \POS?(.25)*{\hole}="2x"; \POS?(.75)*{\bullet}="2y";
"I"; "2x" **\crv{(55, 7)};
"2x"; "2y" **\crv{(60, 0)};
"2y"; "J" **\crv{(55, -7)}; 
(75,10)*{}="AT";
(85, -10)*{}="AB";
(85, 10)*{}="BT";
(75, -10)*{}="BB";
(71, -2)*{}="CL";
(89, -2)*{}="CR";
"CL"; "CR" **\crv{(80, -10)}; \POS?(.35)*{\hole}="a"; \POS?(.65)*{\hole}="b"; 
"AT"; "b" **\crv{}; \POS?(.35)*{\bullet}="c";
"b"; "AB" **\crv{};
"BB"; "a" **\crv{};
"a"; "c" **\crv{};
"c"; "BT" **\crv{};
{\ar@{<->}(92,0)*{}; (98,0)*{}}; ?(.75)*dir{>}+(-1, 3)*{(S3)};
(105,10)*{}="A'T";
(115, -10)*{}="A'B";
(115, 10)*{}="B'T";
(105, -10)*{}="B'B";
(101, 2)*{}="C'L";
(119, 2)*{}="C'R";
"C'L"; "C'R" **\crv{(110, 10)}; \POS?(.35)*{\hole}="a'"; \POS?(.65)*{\hole}="b'"; 
"A'T"; "a'" **\crv{};
"a'"; "A'B" **\crv{};\POS?(.65)*{\bullet}="c'";
"B'B"; "c'" **\crv{};
"c'"; "b'" **\crv{};
"b'"; "B'T" **\crv{};
(135,10)*{}="VAT";
(145, -10)*{}="VAB";
(145, 10)*{}="VBT";
(135, -10)*{}="VBB";
(131, -2)*{}="VCL";
(149, -2)*{}="VCR";
"VCL"; "VCR" **\crv{(140, -10)}; \POS?(.35)*{\bigcirc}="va"; \POS?(.65)*{\bigcirc}="vb"; 
"VAT"; "vb" **\crv{}; \POS?(.35)*{\bullet}="vc";
"vc"; "VAB" **\crv{};
"vc"; "VBB" **\crv{};
"vb"; "VAB" **\crv{};
"VBB"; "va" **\crv{};
"va"; "vc" **\crv{};
"vc"; "VBT" **\crv{};
{\ar@{<->}(152,0)*{}; (158,0)*{}}; ?(.75)*dir{>}+(-1, 3)*{(VS3)};
(165,10)*{}="VA'T";
(175, -10)*{}="VA'B";
(175, 10)*{}="VB'T";
(165, -10)*{}="VB'B";
(161, 2)*{}="VC'L";
(179, 2)*{}="VC'R";
"VC'L"; "VC'R" **\crv{(170, 10)}; \POS?(.35)*{\bigcirc}="va'"; \POS?(.65)*{\bigcirc}="vb'"; 
"VA'T"; "va'" **\crv{};
"va'"; "VA'B" **\crv{};\POS?(.65)*{\bullet}="vc'";
"vc'"; "VA'T" **\crv{};
"vc'"; "VB'T" **\crv{};
"VB'B"; "vc'" **\crv{};
"vc'"; "vb'" **\crv{};
"vb'"; "VB'T" **\crv{};
\endxy
\]
\caption{Equivalence relations for singular double-points}\label{SingEquiv}
\end{figure}

It is possible to discuss what it means for singular virtual knots to be invariant under homotopy. Two such knots are homotopic if they are related by a sequence of the Reidemeister-type moves for singular virtual knots, taken in conjunction with the Changing Crossing move. Just as with flat virtual knots and links, flat singular virtual knots correspond to homotopy classes of singular virtual knots. On the other hand, the notion of invariance under homotopy for singular virtual links is more restrictive than flat equivalence for singular virtual links, as the (CC) move is never allowed when a crossing involves more than one component of the link. 

\subsection{Vassiliev invariants}
Any virtual knot invariant $V$ with values in an abelian group $A$ can be extended to an invariant of singular virtual knots. To define $V$ on a singular virtual knot, we define the \emph{$n$th derivative of $V$}, an invariant of singular virtual knots with $n$ double-points via the following recursive relation. 

\begin{equation}\label{nthderiv}V^{(n)}(K)=V^{(n-1)}(K^+)-V^{(n-1)}(K^-)\end{equation}

In the equation, let $n\geq 1$ and suppose that $K$ is a singular knot (diagram) with $n$ double-points. Choosing a particular double-point in $K$, we obtain $K^{+}$ by resolving the double-point into a positive crossing, and we obtain $K^{-}$ by resolving the double-point into a negative crossing. To define the first derivative, we set $V^{(0)}=V$. Let us view the derivative of an invariant as a recursive definition for the invariant $V$ of a singular virtual knot, $K$, with $n$ double-points. We see that the derivative gives us a value in $A$ for $V(K)$ that does not depend on the order in which we resolve the $n$ double-points.

\begin{Def} A \emph{Vassiliev invariant of degree $\leq n$ for virtual knots} is an invariant of virtual knots that vanishes on singular virtual knots with more than $n$ double-points. The smallest such $n$ is called the degree of the invariant. \end{Def}

This definition was originally introduced by Kauffman in~\cite{VKT}. We note that the theory of Vassiliev invariants of virtual knots resulting from this definition differs from the one proposed by Goussarov, Polyak and Viro in~\cite{PolyakViro}. In particular, Kauffman's theory of degree one Vassiliev invariants is highly non-trivial, while in the theory of Vassiliev invariants introduced in~\cite{PolyakViro}, there are no Vassiliev invariants of degree one or two.

A \emph{universal} Vassiliev invariant of degree one, say $\mathbf{U}_1$, has the following additional property. Any other Vassiliev invariant of degree one with values in an abelian group can be recovered from $\mathbf{U}_1$ by a certain natural construction. To be more precise, we consider the following definitions.

Let $V$ be a Vassiliev invariant of degree one for virtual knots, taking values in an abelian group $A$. Denote by $V^{(1)}$ the \emph{first derivative of $V$}, as in~\eqref{nthderiv}. Note that the map $V^{(1)}$ is constant on homotopy classes of singular virtual knots, since $V$ is Vassiliev of degree one. Now let $\mathbf{Z}[\mathcal{H}^1]$ be the free abelian group generated by the set of homotopy classes of singular virtual knots with one double-point.  We have an induced map $V^{\ast}:\mathbf{Z}[\mathcal{H}^1]\rightarrow A$ that is defined on $K_0\in\mathcal{H}^1$ as follows and extended to all of $\mathbf{Z}[\mathcal{H}^1]$ by linearity.

\begin{equation}\label{Vstar}V^{\ast}([K_0])=V^{(1)}(K_0)
\end{equation}

This map is well-defined since $V^{(1)}$ is constant within each homotopy class of singular virtual knots. We may now define the universal Vassiliev invariant of degree one and give a sufficient condition for universality.

\begin{Def} A Vassiliev invariant, $\mathbf{U}_1$, of degree one for virtual knots is the \emph{universal Vassiliev invariant of degree one} if every other Vassiliev invariant $V$ of degree one for virtual knots can be recovered from $\mathbf{U}_1$ using only the first derivative of $V$ and the values of $V$ on one representative for each homotopy class of virtual knots. For instance, for all virtual knots $K$ homotopic to $K_0$, $\mathbf{U}_1$ is universal if, given \emph{any} degree one Vassiliev invariant $V$ of virtual knots, $\mathbf{U}_1$ satisfies the following equation. $$V(K)=V(K_0)+V^{\ast}\left(\frac{1}{2}(\mathbf{U}_1(K)-\mathbf{U}_1(K_0))\right)$$
\end{Def}

\section{Invariants of Virtual Knots}
In this section, we give a sequence of three new degree one Vassiliev invariants of virtual knots. The strongest of the three is a universal Vassiliev invariant of degree one.
\subsection{The polynomial invariant}

For a given virtual knot $K$ with diagram $\widetilde K$, let us choose a classical crossing, $d$, in this diagram and perform the smoothing operation pictured in Figure~\ref{Smooth}. 

\begin{figure}[h]
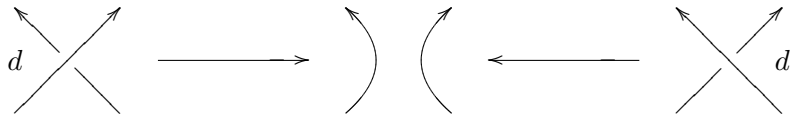

\[
\xy
(-24,-7)*{}; (-10, 7)*{} **\crv{}; \POS?(.5)*{\hole}="y"; ?(1)*\dir{>}+(-14, -7)*{d};
(-10,-7)*{}; "y" **\crv{};
"y"; (-24, 7)*{} **\crv{}; ?(0)*\dir{>}+(-2, -2)*{};
{\ar@{->}(-5,0)*{}; (15,0)*{}}; ?(.75)*\dir{-}+(-5, 3)*{};
(20,-7)*{}; (20, 7)*{} **\crv{(28,0)}; ?(1)*\dir{>}+(-2, 3)*{};
(34, -7)*{}; (34, 7)*{} **\crv{(26,0)}; ?(1)*\dir{>}+(-2, 3)*{};
{\ar@{<-}(39,0)*{}; (59,0)*{}}; ?(.75)*\dir{-}+(-5, 3)*{};
(78,-7)*{}; (64, 7)*{} **\crv{}; \POS?(.5)*{\hole}="z"; ?(1)*\dir{>}+(14, -7)*{d};
(64,-7)*{}; "z" **\crv{};
"z"; (78, 7)*{} **\crv{}; ?(0)*\dir{>}+(-2, -2)*{};
\endxy
\]
\caption{Smoothing of a crossing}\label{Smooth}
\end{figure}

This smoothing produces a two-component virtual link. We choose an ordering $(1,2)$ for the components of the link, and let $1\cap 2$ denote the set of crossings between the two components. Taking the flat virtual link that is the shadow of the smoothed virtual link, we give signs to each classical crossing in $1\cap 2$ as shown in Figure~\ref{Index}. 

\begin{figure}[h]
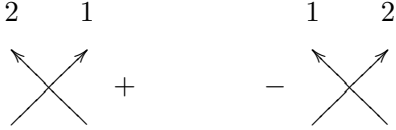

\[
\xy
(-20,-5)*{}; (-10, 5)*{} **\crv{}; ?(0)*\dir{>}+(0, 5)*{1};
(-10,-5)*{}; (-20, 5)*{} **\crv{}; ?(0)*\dir{>}+(0, 5)*{2};
(20,-5)*{}; (30, 5)*{} **\crv{}; ?(0)*\dir{>}+(0, 5)*{2};
(30, -5)*{}; (20, 5)*{} **\crv{}; ?(0)*\dir{>}+(0, 5)*{1};
(-5,0)*{+}; (15,0)*{-};
\endxy
\]
\caption{Definition of sgn(x) for $x\in 1\cap 2$}\label{Index}
\end{figure}

Now we have the following definition.

\begin{Def} The \emph{intersection index} of an ordered virtual link associated to a smoothed crossing $d$ in a virtual knot diagram $\widetilde K$ is given by $$i(d)=\sum _{x\in 1\cap 2} sgn(x).$$
\end{Def}

This intersection index is useful for distinguishing virtual knots because of the following property.

\begin{Lemma} Let $L$ be an ordered two-component flat virtual link, and denote by $1\cap 2$ the set of crossings between components. The sum $i(L)=\sum _{x\in 1\cap 2} sgn(x)$, where $sgn(x)$ is defined in Figure~\ref{Index}, is invariant under flat virtual equivalence.
\end{Lemma}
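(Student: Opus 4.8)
The plan is to verify that $i(L)$ is unchanged by each of the generating moves of flat virtual equivalence, namely the flat Reidemeister moves (F1), (F2), (F3) of Figure~\ref{FlatMoves} together with the (flat) virtual Reidemeister moves of Figure~\ref{VirtualReid}. The starting observation is that $\mathrm{sgn}(x)$ as defined in Figure~\ref{Index} depends only on the orientations of the two strands meeting at $x$: writing $v_1,v_2$ for the oriented tangent vectors of the strands belonging to components $1$ and $2$ respectively, one has $\mathrm{sgn}(x)=+1$ exactly when $v_1\times v_2>0$ and $-1$ when $v_1\times v_2<0$. Consequently $i(L)$ is a signed count of classical crossings lying in $1\cap 2$, and a move can alter $i(L)$ only if it creates, destroys, or reorients such a crossing. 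Since virtual crossings never contribute to the sum and every Reidemeister-type move preserves the orientations of the strands, it suffices to track the classical inter-component crossings through each move.

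First I would dispose of the moves that introduce no inter-component classical crossing. The move (F1) creates or deletes a single self-crossing of one component, which by definition does not lie in $1\cap 2$, so $i(L)$ is unchanged. Each virtual move (V1), (V2), (V3) alters only virtual crossings, and the flat move (V'3) slides a classical strand past two virtual crossings without changing the two strands that meet at the classical crossing or their orientations; hence none of these moves changes the signed inter-component count. The substantive cases are therefore (F2) and (F3). For (F2), the two strands bounding the bigon either belong to the same component, in which case the two crossings created are self-crossings and contribute nothing, or to the two different components, in which case both new crossings lie in $1\cap 2$. In the latter situation the key point is that the two crossings carry opposite signs: as one traverses the bigon, the roles of the tangent directions are reflected, so the two values of $v_1\times v_2$ have opposite sign and cancel in the sum. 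For (F3), the three participating strands meet pairwise in exactly three crossings both before and after the move; the move merely slides one strand across the crossing of the other two, preserving which pair of components meets at each crossing and preserving all orientations, so each crossing retains both its membership in $1\cap 2$ and its sign. Thus $i(L)$ is preserved in every case, which establishes the lemma.

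The main obstacle is the (F2) computation: one must confirm, under the orientation-based sign convention of Figure~\ref{Index}, that the two crossings created in a bigon between distinct components genuinely receive opposite signs. This is a short local check---computing $v_1\times v_2$ at the two crossings of the bigon and observing that the tangent directions are reflected---but it is the one step where the precise sign convention, rather than a bare crossing count, is essential, and it is the exact analogue of the reasoning that shows the classical linking number is unaffected by a second Reidemeister move.
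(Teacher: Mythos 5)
Your proof is correct and takes essentially the same approach as the paper, which simply asserts the result with the remark that ``it is an easy exercise to verify that the sum $i(L)$ is invariant under all classical and virtual flat Reidemeister moves''; your case-by-case check---triviality for (F1) and the virtual moves, the opposite-sign cancellation of the two bigon crossings in (F2), and sign preservation in (F3)---is precisely that exercise carried out, including the one genuinely substantive point, the orientation-based cancellation in (F2). Nothing further is needed.
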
.

It is an easy exercise to verify that the sum $i(L)$ is invariant under all classical and virtual flat Reidemeister moves. Furthermore, reordering the components of the link merely changes the sign of the intersection index. Thus, we define a polynomial invariant for virtual knots as follows.

\begin{Def} Let the polynomial $\mathbf{p}_t(K)\in\mathbf{Z}[t]$ for virtual knot $K$ with diagram $\widetilde K$ be as follows.
$$\mathbf{p}_t(K)=\sum _d sign(d)(t^{|i(d)|}-1)$$ 
In this formula, the sum is over all non-virtual crossings, $d$, in $\widetilde K$, the quantity $sign(d)$ is the sign of the crossing $d$ as defined in Figure~\ref{Signs}, and the letter $t$ is a variable. 
\end{Def} 

We take the absolute value of the intersection index to account for the ambiguity in the choice of ordering of the components of the smoothed virtual link. It should be noted that this polynomial is similar to the $u_{\pm}$ cobordism invariants of knots in thickened surfaces introduced by Turaev in~\cite{TuraevCobordism}. Now we show that our polynomial has the following property.

\begin{Prop}\label{polyprop} The polynomial $\mathbf{p}_t$ is a Vassiliev invariant of degree one for virtual knots. \end{Prop}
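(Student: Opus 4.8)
The plan is to verify the two defining conditions for a Vassiliev invariant of degree one: first, that $\mathbf{p}_t$ is a genuine virtual knot invariant (well-defined on equivalence classes); and second, that its second derivative $\mathbf{p}_t^{(2)}$ vanishes on every singular virtual knot with two double-points, while exhibiting a singular knot with one double-point on which $\mathbf{p}_t^{(1)}$ does not vanish (so that the degree is exactly one, not zero).

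First I would establish invariance. The polynomial is a sum over classical crossings $d$ of $\mathrm{sign}(d)(t^{|i(d)|}-1)$, where $i(d)$ is computed from the smoothed, shadowed two-component link. By the Lemma, each $|i(d)|$ is unchanged under flat virtual equivalence of the smoothed link, so I must check that the collection of summands is preserved under each classical and virtual Reidemeister move applied to $\widetilde K$. For an (R1) move, the crossing $d$ created or destroyed is a self-crossing of a single component after smoothing, so one of the two components of the smoothed link is a small disjoint circle; this forces $i(d)=0$ and hence $t^{|i(d)|}-1=0$, contributing nothing. For an (R2) move, the two crossings created have opposite signs $\mathrm{sign}(d)$ but identical smoothed links, hence equal $|i(d)|$, so their contributions cancel. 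For (R3) and all virtual moves, no crossings are created or destroyed and the intersection indices of the surviving crossings are unchanged by the Lemma applied strand-by-strand. This handles well-definedness.

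Next I would compute the derivatives. Resolving a double-point into $K^+$ and $K^-$ differs only at that one crossing. In $\mathbf{p}_t^{(1)}(K) = \mathbf{p}_t(K^+) - \mathbf{p}_t(K^-)$, every summand coming from a crossing other than the resolved one appears identically in both terms and cancels, because changing a single crossing from positive to negative does not alter the shadow of the smoothed link at any other crossing, hence leaves every other $i(d)$ fixed. What survives is the contribution of the resolved crossing itself, namely $(+1)(t^{|i|}-1) - (-1)(t^{|i|}-1) = 2(t^{|i|}-1)$, where $i$ is the intersection index at the double-point; crucially this depends only on the flat (shadow) data and so $\mathbf{p}_t^{(1)}$ is constant on homotopy classes, as the text already anticipates. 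Taking a second derivative at another double-point, this surviving quantity $2(t^{|i|}-1)$ itself depends only on the shadow of the smoothed diagram, which is insensitive to the over/under choice at the second crossing; therefore $K^{++}, K^{+-}, K^{-+}, K^{--}$ all yield the same value of $2(t^{|i|}-1)$ and the alternating sum $\mathbf{p}_t^{(2)}$ vanishes identically. This gives degree $\le 1$, and degree exactly one follows by producing any singular knot with one double-point of nonzero intersection index, so that $\mathbf{p}_t^{(1)} = 2(t^{|i|}-1)\neq 0$.

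The main obstacle I anticipate is the bookkeeping in the invariance step, specifically confirming that the summands genuinely match up under (R2) and (R3): I must be careful that smoothing the diagram and then passing to the shadow commutes appropriately with the moves, and that the component ordering and the absolute value $|i(d)|$ correctly absorb the orientation and ordering ambiguities so that the paired crossings in an (R2) move really produce equal $|i(d)|$. The derivative computations, by contrast, are clean once one observes the key structural fact that switching a crossing does not disturb the smoothed shadow at any other crossing; the conceptual content is precisely that $\mathbf{p}_t^{(1)}$ is determined by shadow data alone, which is what forces the vanishing of the second derivative.
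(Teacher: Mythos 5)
Your proposal is correct and follows essentially the same route as the paper: invariance is checked move-by-move with the identical (R1)/(R2) cancellation arguments, and the degree bound rests on the same key observation that $|i(d)|$ is shadow data and hence insensitive to crossing changes, so your closed form $\mathbf{p}_t^{(1)}(K)=2(t^{|i|}-1)$ is just a repackaging of the paper's direct four-term cancellation for $K_{ab}$. The only item you leave implicit is an explicit example certifying that the degree is exactly one (a double-point with $|i|\neq 0$); the paper supplies this via the virtualized trefoil of Figure~\ref{virt_tref}, where gluing at an upper crossing gives a nonvanishing first derivative.
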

\begin{proof} First, let us verify that $\mathbf{p}_t$ is indeed an invariant. Consider the behavior of the polynomial under Reidemeister move 1. This move adds a new crossing, and therefore adds a new term to the sum. If we smooth at the new crossing, we obtain a two-component virtual link with disjoint components. Thus, $|i(d)|=0$ for the new crossing $d$. Hence, the term corresponding to the new crossing vanishes. The terms corresponding to the other crossings are unchanged since the Reidemeister 1 move does not affect the intersection index of curves arising from other crossings. Next, we consider Reidemeister 2. This move adds two crossings to a diagram and, hence, adds two new terms to the sum. It is clear that the absolute value of the intersection index of the link obtained by smoothing at the first new crossing is equal to the absolute value of the intersection index of the link corresponding to smoothing at the second crossing. Since these crossings have opposite signs, our two new terms cancel each other. The remaining terms are unchanged since Reidemeister moves applied to links do not affect intersection indices of flat links arising from other crossings. It is an easy exercise to verify that the terms of the sum defining $\mathbf{p}_t$ are unchanged under a Reidemeister 3 move. Finally, since our sum is over classical crossings and since virtual Reidemeister moves don't effect intersection indices of flat links, $\mathbf{p}_t$ is invariant under all virtual Reidemeister moves as well. Thus, $\mathbf{p}_t$ is an invariant of virtual knots.

To prove that $\mathbf{p}_t$ is Vassiliev of degree one, let $K_{ab}$ be a singular virtual knot with two double-points, $a$ and $b$. Now let $K_{a+b+}$ denote the knot obtained by resolving both $a$ and $b$ positively, and define $K_{a+b-}$, $K_{a-b+}$, and $K_{a-b-}$ similarly. Then using the recursion relation~\eqref{nthderiv} from the definition of Vassiliev invariant, we have the following.
$$\mathbf{p}_t(K_{ab})=\mathbf{p}_t(K_{a+b+})-\mathbf{p}_t(K_{a+b-})-\mathbf{p}_t(K_{a-b+})+\mathbf{p}_t(K_{a-b-}).$$
Note that, for any $\epsilon ,\delta =+,-$,all of the terms in each polynomial $\mathbf{p}_t(K_{a\epsilon b\delta })$ are the same except for the terms coming from the new $a$ and $b$ crossings. Furthermore, $|i(a)|$ and $|i(b)|$ are the same in each polynomial $\mathbf{p}_t(K_{a\epsilon b\delta })$. The only difference is that two terms $(t^{|i(a)|}-1)$ have coefficient +1 and two have coefficient -1, while two $(t^{|i(b)|}-1)$ terms have +1 coefficient and two have coefficient -1. Hence, $\mathbf{p}_t(K_{ab})=0$. It is an easy exercise to find a singular virtual knot $K$ with one double-point such that the first derivative of $\mathbf{p}_t$ doesn't vanish on $K$. (Consider, for instance, the example in Figure~\ref{virt_tref}. If we glue at one of the upper crossings, we get a singular virtual knot with one double-point such that resolving one way gives us a virtual knot with non-zero $\mathbf{p}_t$ value, while resolving the other way gives the unknot.) Hence, we conclude that $\mathbf{p}_t$ is Vassiliev of degree one.
\end{proof}

\textbf{Remark} It is interesting to note that the virtual knot in Figure~\ref{virt_tref} has trivial Jones polynomial. (See~\cite{VKT} for the definition of the Jones polynomial for virtual knots.) However, a quick computation shows that, for this ``virtualized trefoil" $K$, $\mathbf{p}_t(K)=2(t^2-1)$.

\begin{figure}[h]
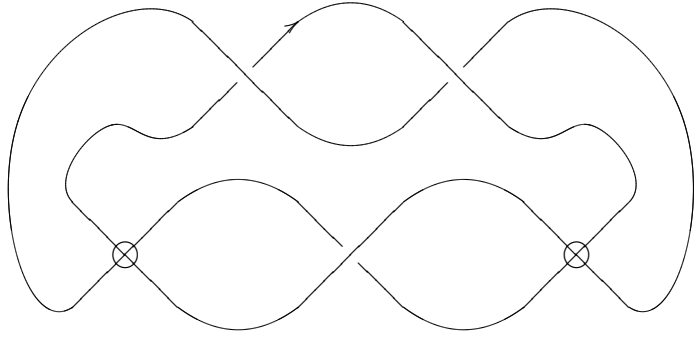

\[
\xy
(-54,-7)*{}; (-40, 7)*{} **\crv{}; 
(-40,-7)*{}; (-54, 7)*{} **\crv{}; 
(-47,0)*{\bigcirc};
(-24,-7)*{}; (-10, 7)*{} **\crv{}; \POS?(.5)*{\hole}="x"; 
(-10,-7)*{}; "x" **\crv{};
"x"; (-24, 7)*{} **\crv{}; 
(6,-7)*{}; (20, 7)*{} **\crv{}; 
(20, -7)*{}; (6, 7)*{} **\crv{}; 
(13,0)*{\bigcirc};
(-24,17)*{}; (-38, 31)*{} **\crv{}; \POS?(.5)*{\hole}="y"; 
(-38,17)*{}; "y" **\crv{};
"y"; (-24,31)*{} **\crv{};
(4,17)*{}; (-10, 31)*{} **\crv{}; \POS?(.5)*{\hole}="z"; 
(-10,17)*{}; "z" **\crv{};
"z"; (4,31)*{} **\crv{};
(-24,31)*{}; (-10,31)*{} **\crv{(-17,36)}; ?(0)*\dir{>}+(0, 5)*{};
(-24,17)*{}; (-10,17)*{} **\crv{(-17,12)};
(-40,-7)*{}; (-24,-7)*{}; **\crv{(-32,-13)};
(-40,7)*{}; (-24,7)*{}; **\crv{(-32,13)};
(-10,-7)*{}; (6,-7)*{}; **\crv{(-2,-13)};
(-10,7)*{}; (6,7)*{}; **\crv{(-2,13)};
(-54, 7)*{}; (-38,17)*{} **\crv{(-57,10)&(-47,23)&(-44,13)};
(20, 7)*{}; (4, 17)*{} **\crv{(23,10)&(13,23)&(10,13)};
(-54, -7)*{}; (-38,31)*{} **\crv{(-60,-11)&(-70,23)&(-45,36)};
(20, -7)*{}; (4, 31)*{} **\crv{(26,-11)&(36,23)&(11,36)};
\endxy
\]
\caption{The virtualized trefoil}\label{virt_tref}
\end{figure}

Our new polynomial invariant has yet another use, as illustrated by the following proposition. 

\begin{Prop}\label{mod2} Let $\mathbf{p}_t(mod2)$ be the polynomial invariant with coefficients reduced mod 2. Then $\mathbf{p}_t(mod2)$ is a homotopy invariant of virtual knots.\end{Prop}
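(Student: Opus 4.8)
The plan is to leverage Proposition~\ref{polyprop}, which already establishes that $\mathbf{p}_t$, and hence its mod-2 reduction $\mathbf{p}_t(mod2)$, is invariant under all classical and virtual Reidemeister moves. Since homotopy of virtual knots is generated by these moves together with the single additional Changing Crossing (CC) move, it therefore suffices to check that $\mathbf{p}_t(mod2)$ is unchanged under a (CC) move.

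The key observation is that, after reducing coefficients modulo $2$, the sign of a crossing becomes invisible. Since $sign(d)\in\{+1,-1\}$ and $+1\equiv -1\pmod 2$, we have
$$\mathbf{p}_t(K)\equiv\sum_d\left(t^{|i(d)|}-1\right)\pmod 2,$$
where the sum runs over all classical crossings $d$ of the diagram $\widetilde K$. Thus mod $2$ the polynomial depends only on the multiset of quantities $|i(d)|$ and no longer on the individual signs of the crossings. This already disposes of the half of a (CC) move that merely flips $sign(d)$: the contribution of the changed crossing is $(t^{|i(d)|}-1)$ regardless of whether $d$ is positive or negative.

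Next I would verify that a (CC) move, which exchanges the over- and under-strand at a single classical crossing $d$, leaves every $|i(d')|$ unchanged. This rests on the fact that each intersection index is computed from a \emph{flat} virtual link: to form $i(d')$ one smooths the diagram at $d'$ and passes to the shadow, discarding all over/under information. Smoothing at $d$ itself destroys that crossing entirely, so $i(d)$ manifestly does not register whether $d$ was positive or negative; and for any other crossing $d'$, the crossing $d$ survives in the smoothed link only as a flat crossing of the shadow, whose contribution to $i(d')$ through $sgn$ depends solely on the flat picture. Hence no $|i(d')|$ can change, and the displayed mod-$2$ sum is term-by-term unaffected.

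Combining these points, $\mathbf{p}_t(mod2)$ is invariant under the (CC) move in addition to the classical and virtual Reidemeister moves, so it is a homotopy invariant. The only step demanding genuine care is the third one: one must check both that the smoothed-and-flattened link at the changed crossing is literally unaffected and that the changed crossing contributes identically to the indices of \emph{all} other smoothings. Both follow because the intersection indices are functions of the shadow alone, exactly the content of the flat-link invariance of $i(L)$ recorded in the Lemma preceding the definition of $\mathbf{p}_t$; I expect this to be the main thing worth writing out explicitly, while the sign-collapse in the second step is immediate.
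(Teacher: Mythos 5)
Your proposal is correct and follows essentially the same route as the paper: invariance under classical and virtual Reidemeister moves is inherited from Proposition~\ref{polyprop}, and a (CC) move merely flips the coefficient $sign(d)$ from $\pm1$ to $\mp1$, which is invisible mod $2$. Your explicit verification that all intersection indices $|i(d')|$ are untouched---because they are computed from the flat shadow of the smoothed link---is exactly the fact the paper's brief proof leaves implicit, so you have simply written out a detail the paper takes for granted.
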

\begin{proof} Since $\mathbf{p}_t$ is invariant under classical and virtual Reidemeister moves, so is $\mathbf{p}_t(mod2)$. Furthermore, we note that performing a (CC) move on a crossing in a diagram simply changes the coefficient of the term in $\mathbf{p}_t$ associated to that crossing from $\pm1$ to $\mp1$. Thus, the polynomial with coefficients mod 2 is also invariant under the (CC) move.
\end{proof}

\subsection{The smoothing invariant}
We continue building our sequence of degree one Vassiliev invariants by introducing an analog of $\mathbf{p}_t$ that uses the definition of flat virtual links.

\begin{Def} Let $K$ be a virtual knot with diagram $\widetilde K$. Let $\widetilde K_{smooth}^d$ be the unordered two-component virtual link obtained by smoothing $\widetilde K$ at the crossing $d$ in $\widetilde K$, as illustrated in Figure~\ref{Smooth}. Denote by $[\widetilde K_{smooth}^d]$ the equivalence class of the associated flat virtual link. Furthermore, let $[\widetilde K_{link}^0]$ denote the flat equivalence class of the union of the shadow of $K$ with an unlinked copy of the unknot. Then $\mathbf{S}(K)$ is given by the following formula.
$$\mathbf{S}(K)=\sum _d sign(d)([\widetilde K_{smooth}^d]-[\widetilde K_{link}^0])$$
Here, the sum ranges over all non-virtual crossings $d$ in $\widetilde K$, while the value $sign(d)$, the sign of $d$, is $\pm 1$, as defined in Figure~\ref{Signs}.\end{Def}

 \begin{Prop} $\mathbf{S}$ is a Vassiliev invariant of degree one. \end{Prop}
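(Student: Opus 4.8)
The proof should directly mirror the structure of Proposition~\ref{polyprop}, since $\mathbf{S}$ is built from exactly the same smoothing operation as $\mathbf{p}_t$, but records the flat equivalence class of the smoothed link rather than an integer extracted from it. The plan is to verify first that $\mathbf{S}$ is an invariant of virtual knots, and then that its first derivative vanishes on singular virtual knots with two double-points while being nonzero on some one-double-point example.

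For invariance, I would check the behavior of the defining sum under each Reidemeister move. Under (R1), the new crossing $d$ yields a smoothed link whose two components are split, so $[\widetilde K^d_{smooth}] = [\widetilde K^0_{link}]$ and the new summand $sign(d)([\widetilde K^d_{smooth}] - [\widetilde K^0_{link}])$ is zero; the point here is that the subtraction of the basepoint class $[\widetilde K^0_{link}]$ is exactly what normalizes away the (R1) contribution, playing the role that $-1$ played in the $t^{|i(d)|}-1$ term of $\mathbf{p}_t$. Under (R2) the two new crossings have opposite signs and produce flat-equivalent smoothed links (one can be carried to the other by a flat (F2) move away from the smoothing site), so their summands cancel. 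Under (R3) and the virtual moves, the remaining smoothed links are altered only by flat Reidemeister moves localized away from each smoothing point, hence represent the same flat virtual link classes, so each summand is unchanged; since the sum ranges only over classical crossings, the purely virtual moves leave it untouched as well.

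For the degree-one property, I would run the same two-double-point computation as in Proposition~\ref{polyprop}. Writing $K_{ab}$ for a singular knot with double-points $a$ and $b$ and expanding $\mathbf{S}(K_{ab}) = \mathbf{S}(K_{a+b+}) - \mathbf{S}(K_{a+b-}) - \mathbf{S}(K_{a-b+}) + \mathbf{S}(K_{a-b-})$ via~\eqref{nthderiv}, the key observation is that resolving a double-point positively or negatively changes only the sign attached to that crossing in the defining sum, not the flat equivalence class of any smoothed link, nor which links arise from smoothing the other crossings. So the $a$-summand and $b$-summand each appear twice with coefficient $+1$ and twice with $-1$ and cancel, while every other crossing contributes identically in all four terms; hence $\mathbf{S}(K_{ab})=0$. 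To conclude the degree is exactly one rather than zero, I would exhibit a one-double-point singular virtual knot on which the first derivative is nonzero — the virtualized trefoil of Figure~\ref{virt_tref} already served this purpose for $\mathbf{p}_t$, and since $\mathbf{p}_t$ factors through the flat link class recorded by $\mathbf{S}$, nonvanishing of $\mathbf{p}_t^{(1)}$ there forces nonvanishing of $\mathbf{S}^{(1)}$.

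The only genuinely delicate step is the (R2) cancellation: one must confirm that the two smoothed links obtained from the two new crossings really are flat-equivalent, not merely equal in the integer $|i(d)|$. This is where $\mathbf{S}$ carries strictly more information than $\mathbf{p}_t$, so the argument cannot simply invoke the previous lemma; instead I would give the explicit flat isotopy (an (F2) move) relating the two smoothings. Everything else is the routine bookkeeping already exhibited for $\mathbf{p}_t$, with the flat class $[\widetilde K^0_{link}]$ taking over the normalizing role of the constant $-1$.
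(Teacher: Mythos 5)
Your proposal is correct and takes essentially the same route as the paper, whose entire proof is the single remark that the argument is ``similar to the proof of Proposition~\ref{polyprop}''---you have simply written out that template in full, including the key observations that the basepoint class $[\widetilde K_{link}^0]$ normalizes away the (R1) contribution and that resolving a double-point changes only the sign, not the flat class, of the affected summand. One small correction to your ``delicate step'': the two links obtained by smoothing the two crossings of an (R2) pair are related by a planar isotopy (in the parallel-orientation case) or by flat (F1) kink removals (in the antiparallel case), not by an (F2) move, though this does not affect the conclusion that they are flat equivalent.
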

 \begin{proof} The proof of this proposition is similar to the proof of Proposition~\ref{polyprop}.
 \end{proof}
 
\begin{Prop}\label{smooth_strong} The smoothing invariant $\mathbf{S}$ is strictly stronger than the polynomial invariant $\mathbf{p}_t$. \end{Prop}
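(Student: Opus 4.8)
The plan is to establish two separate facts. First, I would show that $\mathbf{p}_t$ can be recovered from $\mathbf{S}$ by a group homomorphism, which proves that $\mathbf{S}$ is \emph{at least} as strong as $\mathbf{p}_t$. Second, I would exhibit a virtual knot that $\mathbf{S}$ distinguishes from the unknot but $\mathbf{p}_t$ does not, which makes the domination strict.

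For the first fact, recall that $\mathbf{S}(K)$ lives in the free abelian group $\mathbf{Z}[\mathcal{L}]$ generated by flat equivalence classes of (unordered) two-component virtual links. By the Lemma above, the intersection index is a flat virtual link invariant, and since reordering the two components only reverses the sign of $i$, the quantity $|i(L)|$ is a well-defined function on $\mathcal{L}$. I would therefore define a homomorphism $\phi:\mathbf{Z}[\mathcal{L}]\rightarrow\mathbf{Z}[t]$ on generators by $\phi([L])=t^{|i(L)|}$ and extend by linearity. Applying $\phi$ to the defining sum for $\mathbf{S}$, and noting that $|i(\widetilde K_{smooth}^d)|=|i(d)|$ while $i(\widetilde K_{link}^0)=0$ because the auxiliary unknot is unlinked from the shadow of $K$, I obtain
$$\phi(\mathbf{S}(K))=\sum_d sign(d)\bigl(t^{|i(d)|}-t^{0}\bigr)=\sum_d sign(d)\bigl(t^{|i(d)|}-1\bigr)=\mathbf{p}_t(K).$$
Hence $\mathbf{p}_t=\phi\circ\mathbf{S}$, so $\mathbf{S}(K_1)=\mathbf{S}(K_2)$ forces $\mathbf{p}_t(K_1)=\mathbf{p}_t(K_2)$, and $\mathbf{S}$ dominates $\mathbf{p}_t$.

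For strictness, I would search for a virtual knot $K$ at each of whose classical crossings the smoothing produces a flat virtual link of intersection index $0$. Such a $K$ has $\mathbf{p}_t(K)=0=\mathbf{p}_t(\text{unknot})$, because every term $sign(d)(t^{|i(d)|}-1)$ vanishes. If, however, at least one of these smoothings yields a flat link class $[\widetilde K_{smooth}^d]$ that is \emph{not} equal to the split class $[\widetilde K_{link}^0]$, and the corresponding terms of the sum do not all cancel, then $\mathbf{S}(K)\neq 0=\mathbf{S}(\text{unknot})$. This is exactly the kind of phenomenon $\mathbf{p}_t$ is blind to: a nontrivial flat link with vanishing intersection index.

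The main obstacle is verifying that the flat virtual link arising from the chosen smoothing is genuinely inequivalent to the trivial split link, since by design the intersection index offers no help here. To settle this I would invoke a strictly finer invariant of flat virtual links, namely the based matrix invariant in the form developed for singular flat virtual knots with one double-point in Section~\ref{SBMsection}, and compute it on the two competing flat classes to certify that they differ. Once that inequivalence is confirmed, the formal sum defining $\mathbf{S}(K)$ is nonzero while $\mathbf{p}_t(K)=0$, so $\mathbf{S}$ is strictly stronger than $\mathbf{p}_t$.
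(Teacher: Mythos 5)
Your first half is correct and is essentially the paper's argument made explicit: since $|i(\cdot)|$ is an invariant of two-component flat virtual links and $i(\widetilde K_{link}^0)=0$, the assignment $[L]\mapsto t^{|i(L)|}$ extends to a homomorphism $\mathbf{Z}[\mathcal{L}]\rightarrow\mathbf{Z}[t]$ carrying $\mathbf{S}(K)$ to $\mathbf{p}_t(K)$, which is exactly the recovery the paper asserts in one line.

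The strictness half, however, has a genuine gap precisely at the step you flag as ``the main obstacle,'' and the tool you propose to close it does not apply. The singular based matrix theory of Section~\ref{SBMsection} is built for flat \emph{singular virtual knots with one double-point} --- equivalently, singular virtual strings on a single core circle --- and in the paper it serves only to separate the gluing invariant $\mathbf{G}$ from $\mathbf{S}$ (Theorem~\ref{stronger}); it is not defined on two-component flat virtual links, so you cannot ``compute it on the two competing flat classes'' $[\widetilde K_{smooth}^d]$ and $[\widetilde K_{link}^0]$. Moreover, you never exhibit a knot with the required properties: the existence of such an example is the entire content of the strictness claim and cannot be left as a search. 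The paper's actual route is different in both respects. It takes a concrete homotopic pair $K_1,K_2$ (Figure~\ref{knotpair}), observes that the relevant terms of $\mathbf{p}_t$ cancel in pairs because the labeled positive and negative crossings yield equal intersection indices (note that your stronger demand that \emph{every} smoothing have index $0$ is unnecessary --- pairwise cancellation suffices), computes $\mathbf{S}(K_1)-\mathbf{S}(K_2)=2[K_{1+}]-2[K_{1-}]$, and then distinguishes the flat links $K_{1+}$ and $K_{1-}$ by an invariant that \emph{is} defined on two-component flat links: the Goldman-bracket-type map $B(K)=\sum_{x\in 1\cap 2}sgn(x)[K_x]$, whose values are formal sums of flat virtual \emph{knot} classes; those classes are in turn separated by $\mathbf{p}_t(mod2)$, which Proposition~\ref{mod2} shows is a homotopy (hence flat) invariant of virtual knots. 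To salvage your outline you would need to replace the SBM with a genuine flat-link invariant of this kind (or first develop a based-matrix theory for two-component links, which the paper does not do) and supply an explicit example together with a verification that its terms do not cancel.
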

\begin{proof} The absolute value of the intersection index $|i(d)|$, defined above, is an invariant of two-component flat virtual links. Thus, $|i(d)|$ can be recovered from $[\widetilde K_{smooth}^d]$. Hence, $\mathbf{p}_t$ can be recovered from $\mathbf{S}$. Below we show that for homotopic knots $K_1$ and $K_2$ given in Figure~\ref{knotpair}, $\mathbf{p}_t(K_1)=\mathbf{p}_t(K_2)\text{ but }\mathbf{S}(K_1)\neq \mathbf{S}(K_2)$.

The intersection index of the link obtained by smoothing at the labeled positive crossing in $K_1$ is the same as the intersection index of the link obtained by smoothing at the negative crossing in $K_1$. Thus, the corresponding terms in $\mathbf{p}_t(K_1)$ cancel. Similarly, the terms corresponding to the labeled positive and negative crossings in $\mathbf{p}_t(K_2)$ cancel. So it is easy to see that $\mathbf{p}_t(K_1)=\mathbf{p}_t(K_2)$. 

It remains to show that $\mathbf{S}(K_1)\neq \mathbf{S}(K_2)$. A straightforward analysis of the quantity $\mathbf{S}(K_1)- \mathbf{S}(K_2)$ shows that this difference is equal to $2[K_{1+}]-2[K_{1-}]$, where $[K_{1+}]$ is the flat class of the virtual link obtained by smoothing $K_1$ at the labeled positive crossing and $[K_{1-}]$ is the flat class of the virtual link obtained by smoothing $K_1$ at the labeled negative crossing. We'd like to show that these two flat virtual links are distinct. To do this, we make use of a map $B$ that is an invariant of (ordered) two-component flat virtual links. The invariant, defined as follows, is closely related to the Goldman Lie bracket.

$$B(K)=\sum _{x\in 1\cap 2}sgn(x)[K_x]$$

The value $sgn(x)$ is defined in Figure~\ref{Index}, and $K_x$ is the flat virtual knot obtained by smoothing $K$ at a crossing $x\in 1\cap 2$, that is, at a crossing involving both components of the link. We again use square brackets to denote the flat equivalence class of the knot. Now, computing $B$ on our two links (with some arbitrary choice of ordering of the components), we will see that $B(K_{1+})\neq\pm B(K_{1-})$.

To distinguish the flat virtual knots in $B(K_{1+})$ from those in $B(K_{1-})$, we make use of the invariant defined in Proposition~\ref{mod2}. We showed that the polynomial invariant with coefficients mod 2 was a homotopy invariant of virtual knots. Since homotopy classes of virtual knots coincide with flat classes of virtual knots, the invariant can also be thought of as an invariant of flat virtual knots. Straightforward computations of $\mathbf{p}_t(mod2)$ allow us to easily distinguish terms in $B(K_{1+})$ from those in $B(K_{1-})$.
\end{proof}

\begin{figure}[h]
\hspace{.5in}
\includegraphics[height=1.3in]{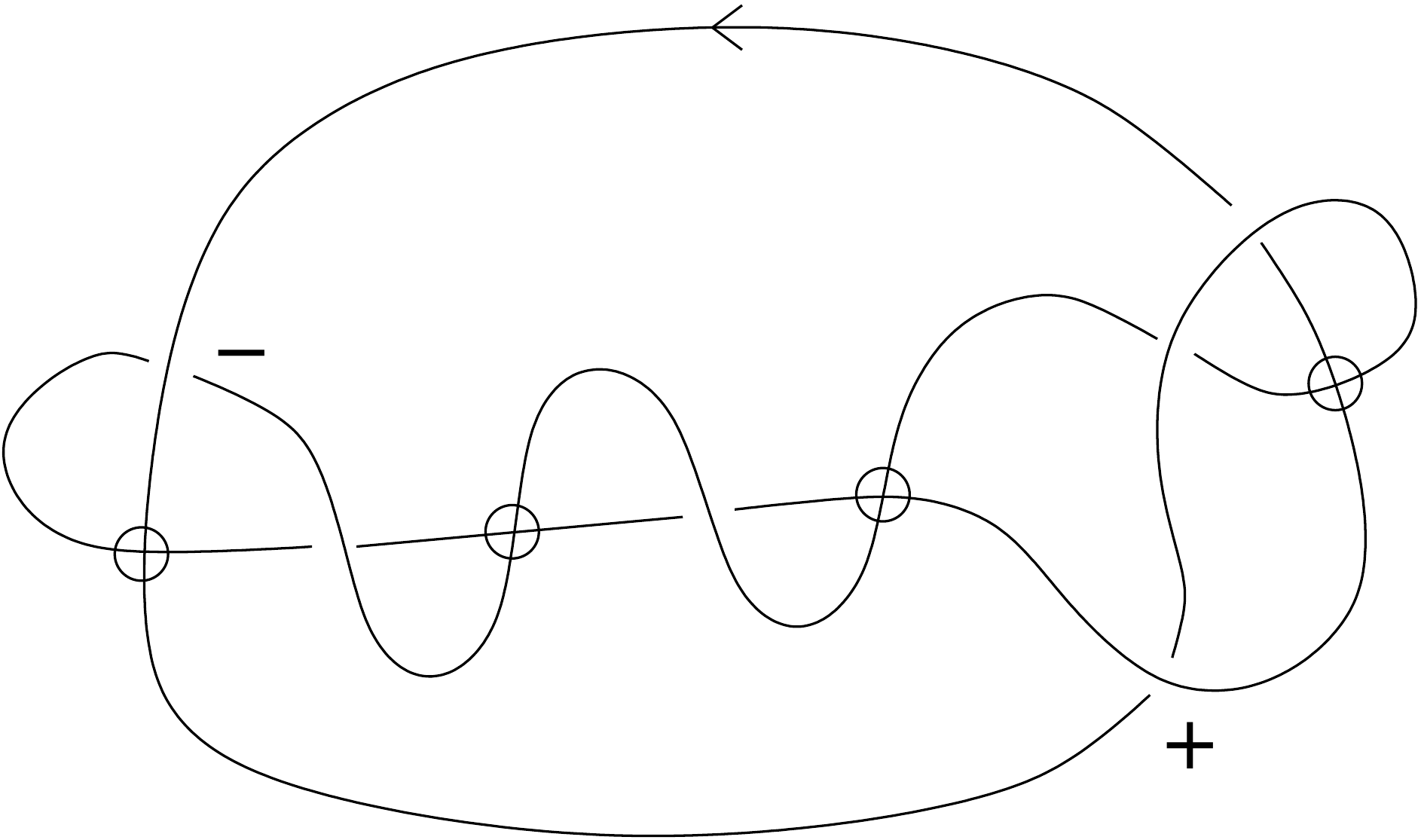}
\hspace{.8in}
\includegraphics[height=1.3in]{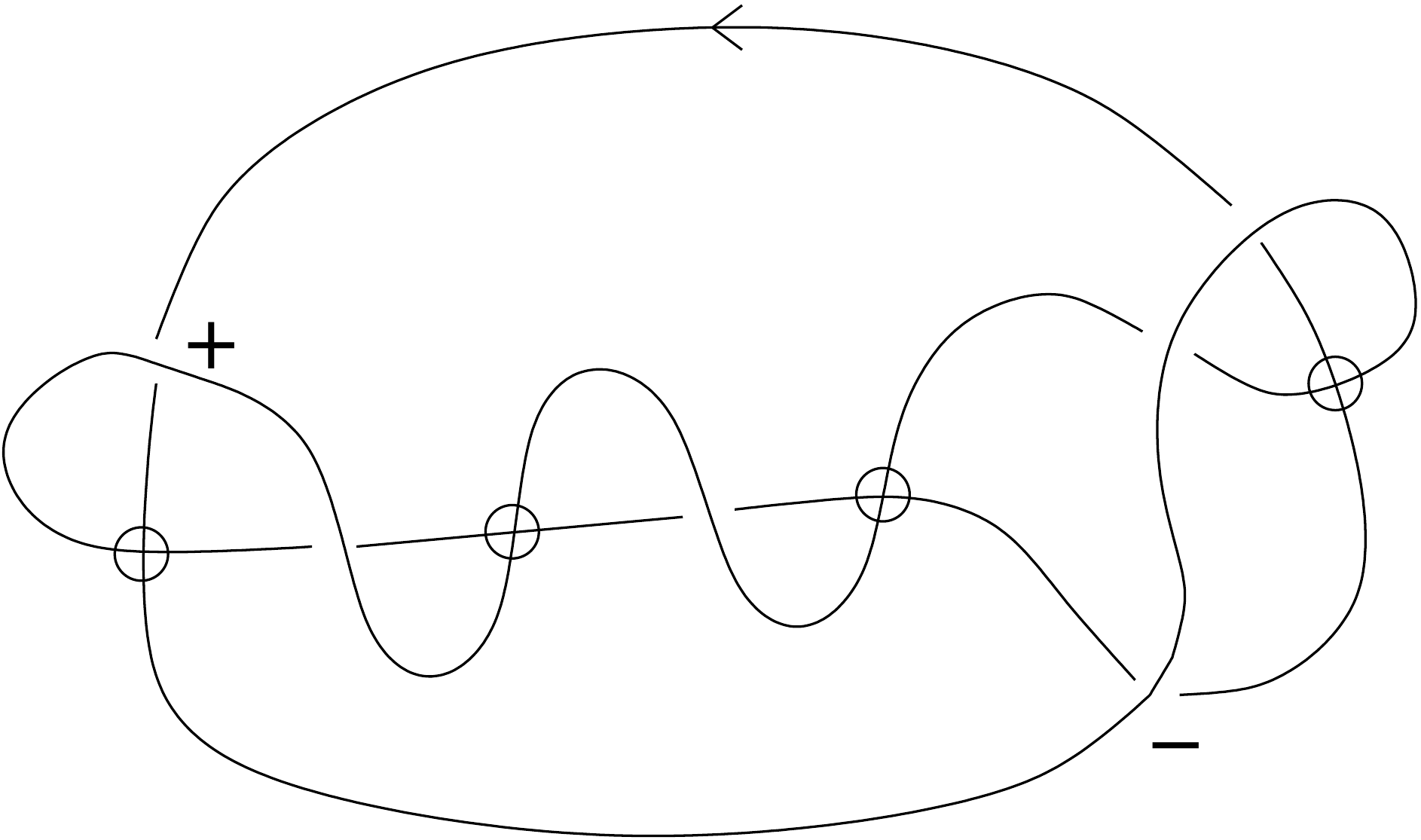}
$$K_1\hspace{6cm}K_2$$
\caption{Example illustrating that $\mathbf{S}$ is strictly stronger than $\mathbf{p}_t$}\label{knotpair}
\end{figure}

 \subsection{The gluing invariant}
 
As we will discover shortly, the next invariant is even stronger than the previous two. Instead of counting intersection indices or creating flat classes of virtual links, we use the flat singular virtual knots introduced earlier in this paper.

\begin{Def} Suppose that $K$ is a virtual knot and choose a fixed diagram, $\widetilde K$, of $K$. Denote by $[\widetilde K_{glue}^d]$ the flat equivalence class of the shadow of the singular virtual knot obtained by gluing strands of $\widetilde K$ together at the crossing $d$ to produce a double-point. Let $[\widetilde K_{sing}^0]$ represent the flat class of the singular virtual knot with one double-point obtained by introducing a small kink into $\widetilde K$ (via a Reidemeister 1 move) and gluing at the kink crossing. (Note: the resulting flat equivalence class is independent of where in the diagram the kink is initially placed.) Now let $\mathbf{G}(K)$ be the virtual knot invariant given by the following formula.
$$\mathbf{G}(K)=\sum _d sign(d)([\widetilde K_{glue}^d]-[\widetilde K_{sing}^0])$$
Again, the sum ranges over all classical crossings $d$ in $\widetilde K$, while the value $sign(d)$, the sign of $d$, is $\pm 1$, as defined in Figure~\ref{Signs}.\end{Def}

\begin{thm} $\mathbf{G}$ is the universal Vassiliev invariant of degree one for virtual knots. \end{thm}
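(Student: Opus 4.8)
The plan is to establish two things: that $\mathbf{G}$ is a Vassiliev invariant of degree one, and that it satisfies the universality relation from the preceding definition. The first part is routine and parallels the proof of Proposition~\ref{polyprop}: one checks that the sum defining $\mathbf{G}$ is unchanged under each classical and virtual (flat) Reidemeister move and each singularity move, the one delicate case being R1, where the extra crossing $d$ contributes a glued term whose flat class equals $[\widetilde K_{sing}^0]$, cancelled precisely by the correction term $-[\widetilde K_{sing}^0]$; degree one then follows by the same two-double-point cancellation as before. The essential new content, and where I would spend the effort, is universality. Note first that every $[\widetilde K_{glue}^d]$ and $[\widetilde K_{sing}^0]$ is a flat singular virtual knot with one double-point, so $\mathbf{G}(K)$ lies in $\mathbf{Z}[\mathcal{H}^1]$ and the map $V^{\ast}$ may legitimately be applied to it.

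The key computation is the behaviour of $\mathbf{G}$ under a single crossing change. Suppose diagrams $\widetilde K$ and $\widetilde K'$ differ only by switching one crossing $c$, say positive in $\widetilde K$ and negative in $\widetilde K'$. The crucial observation is that each glued class $[\widetilde K_{glue}^d]$ and the class $[\widetilde K_{sing}^0]$ depend only on the \emph{shadow} of the diagram, and switching $c$ does not alter the shadow. Hence for every $d\neq c$ the corresponding terms of $\mathbf{G}(\widetilde K)$ and $\mathbf{G}(\widetilde K')$ coincide and cancel in the difference, while the term at $c$ reverses sign. A short bookkeeping then gives
$$\mathbf{G}(K)-\mathbf{G}(K')=2\bigl([\widetilde K_{glue}^c]-[\widetilde K_{sing}^0]\bigr),$$
so the difference is manifestly divisible by $2$ in the free abelian group $\mathbf{Z}[\mathcal{H}^1]$.

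Next I would translate this into a statement about $V$ by applying $V^{\ast}$. By definition $V^{\ast}([\widetilde K_{glue}^c])=V^{(1)}(\widetilde K_{glue}^c)$, and resolving the glued double-point at $c$ positively returns $K$ while resolving it negatively returns $K'$, so the recursion~\eqref{nthderiv} gives $V^{(1)}(\widetilde K_{glue}^c)=V(K)-V(K')$. On the other hand, resolving the kink double-point in $\widetilde K_{sing}^0$ either way yields a diagram that is R1-equivalent to $K$, so $V^{(1)}(\widetilde K_{sing}^0)=V(K)-V(K)=0$ since $V$ is a virtual knot invariant. Applying $V^{\ast}$ to the displayed identity therefore yields the single-crossing-change relation
$$V(K)-V(K')=\tfrac{1}{2}\,V^{\ast}\bigl(\mathbf{G}(K)-\mathbf{G}(K')\bigr).$$

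Finally I would integrate this relation along a homotopy. If $K$ is homotopic to $K_0$, then by definition they are joined by a finite sequence of classical and virtual Reidemeister moves together with (CC) crossing changes. Reidemeister moves leave both $V$ and $\mathbf{G}$ fixed, while each (CC) move contributes one instance of the single-crossing-change relation; summing over the sequence, the right-hand sides telescope because $V^{\ast}$ is linear on $\mathbf{Z}[\mathcal{H}^1]$, giving
$$V(K)-V(K_0)=\tfrac{1}{2}\,V^{\ast}\bigl(\mathbf{G}(K)-\mathbf{G}(K_0)\bigr)=V^{\ast}\!\left(\tfrac{1}{2}\bigl(\mathbf{G}(K)-\mathbf{G}(K_0)\bigr)\right),$$
which is exactly the universality relation. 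The main obstacle I anticipate is the crossing-change computation of the second paragraph: one must argue carefully that passing to flat (shadow) equivalence classes decouples the crossings, so that only the switched crossing contributes and its contribution is exactly doubled. Once this shadow-invariance is pinned down, the identification via the first derivative and the telescoping are formal, and the well-definedness of $\tfrac{1}{2}(\mathbf{G}(K)-\mathbf{G}(K_0))$ is guaranteed by the evenness established above.
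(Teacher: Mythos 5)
Your proposal is correct and takes essentially the same route as the paper: telescope $V$ along the (CC) moves of a homotopy from $K_0$ to $K$, identify each jump $V(K_i)-V(K_{i-1})$ with $V^{(1)}$ of the knot glued at the switched crossing, and use the fact that a crossing change flips the sign of exactly one term of $\mathbf{G}$ (the shadow being unchanged), which produces the factor $2$ and the divisibility needed for $\tfrac{1}{2}(\mathbf{G}(K)-\mathbf{G}(K_0))\in\mathbf{Z}[\mathcal{H}^1]$. Your explicit check that $V^{\ast}([\widetilde K_{sing}^0])=0$ merely makes precise a step the paper compresses into the assertion that the last equality ``follows directly from the definition of $\mathbf{G}$.''
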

\begin{proof} A straightforward combinatorial argument shows that $\mathbf{G}$ is indeed an invariant, while the proof that $\mathbf{G}$ is Vassiliev of degree one is similar to the proofs for $\mathbf{p}_t$ and $\mathbf{S}$. It remains to show that $\mathbf{G}$ is the \emph{universal} Vassiliev invariant of degree one. 

Let $V$ be a degree one Vassiliev invariant of virtual knots, and let $K\in[K_0]$, where $K$ is a virtual knot in the homotopy class of the representative virtual knot $K_0$. Since $K$ and $K_0$ are homotopic, they have diagrams that are related by a sequence of Reidemeister moves and $m$ (CC) moves, where $m\geq0$. Let $K_0,K_1,...,K_{m-1},K_m=K$ be a sequence of knots such that $K_i$ and $K_{i-1}$ have diagrams that differ by exactly one (CC) move. Then we have the following.

\begin{eqnarray} V(K) - V(K_0) & = & V(K_m)-V(K_{m-1})+V(K_{m-1})-\cdots -V(K_1)+V(K_1)-V(K_0)\nonumber\\
&=& \sum _{i=1}^m (V(K_i)-V(K_{i-1}))\nonumber\\
&=& \sum _{i=1}^m sign(i)V^{(1)}(K_i \bullet)\nonumber
\end{eqnarray}
Here, let $K_i \bullet$ denote the knot obtained by gluing at the crossing in the diagram of $K_i$ that differs from the one in the diagram of $K_{i-1}$ by a (CC) move. If resolving $K_i \bullet$ positively gives the diagram for $K_i$, set $sign(i)=1$. If, instead, resolving $K_i \bullet$ positively gives $K_{i-1}$, set $sign(i)=-1$. Then, by equation~\eqref{Vstar}, we have the following.

\begin{eqnarray}
V(K)-V(K_0) &=& V^{\ast}(\sum _{i=1}^m sign(i)[K_i \bullet])\nonumber\\
&=& V^{\ast}\left(\frac{1}{2}(\mathbf{G}(K)-\mathbf{G}(K_0))\right)\nonumber
\end{eqnarray}

This last equality follows directly from the definition of $\mathbf{G}$. We note that $\mathbf{G}(K)-\mathbf{G}(K_0)$ takes values in $2\mathbf{Z}[\mathcal{H}^1]$, since (CC) moves change the coefficients of terms from $\pm 1$ to $\mp 1$. This explains why the $\frac{1}{2}$ appears in the last equality. Now by the definition of universality, the proof that $\mathbf{G}$ is universal is complete.
\end{proof}

One interesting result is that, when a knot diagram is smoothed at some crossing to create a link, more information is lost than if the knot is glued at the crossing. The next theorem illustrates this.

\begin{thm}\label{stronger} $\mathbf{G}$ is strictly stronger than $\mathbf{S}$, i.e. the following two properties hold. \begin{enumerate} 
\item If a pair of homotopic virtual knots, $K_i$ and $K_j$ satisfy $\mathbf{S}(K_i)\neq \mathbf{S}(K_j)$, then  $\mathbf{G}(K_i)\neq \mathbf{G}(K_j)$. 
\item There exist two homotopic virtual knots $K_1$ and $K_2$ such that $\mathbf{S}(K_1)= \mathbf{S}(K_2)$ but  $\mathbf{G}(K_1)\neq \mathbf{G}(K_2)$.\end{enumerate}\end{thm}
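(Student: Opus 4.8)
The plan is to prove the two parts separately, with part (1) following almost immediately from the universality of $\mathbf{G}$ established in the preceding theorem, and part (2) requiring an explicit example together with a sufficiently fine invariant of flat singular virtual knots. For part (1), the key observation is that $\mathbf{S}$ is itself a degree one Vassiliev invariant (this was shown in the proposition asserting that $\mathbf{S}$ is Vassiliev of degree one). Therefore the universality formula applies to $V = \mathbf{S}$. Given homotopic knots $K_i$ and $K_j$, I would take $K_0 = K_j$ as the chosen representative of their common homotopy class, so that $\mathbf{S}(K_i) = \mathbf{S}(K_j) + \mathbf{S}^{\ast}\left(\tfrac{1}{2}(\mathbf{G}(K_i) - \mathbf{G}(K_j))\right)$. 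If $\mathbf{G}(K_i) = \mathbf{G}(K_j)$, the argument of $\mathbf{S}^{\ast}$ vanishes, forcing $\mathbf{S}(K_i) = \mathbf{S}(K_j)$. The contrapositive is exactly the first claim, and this also explains why the statement is restricted to homotopic pairs: universality is phrased relative to a fixed homotopy class.

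For part (2) I would first make the relationship between $\mathbf{G}$ and $\mathbf{S}$ fully concrete via a \emph{smoothing homomorphism} $\sigma \colon \mathbf{Z}[\mathcal{H}^1] \to \mathbf{Z}[\text{two-component flat virtual links}]$, defined on generators by sending a flat singular virtual knot with one double-point to the flat two-component link obtained by the oriented smoothing of that double-point (as in Figure~\ref{Smooth}). I would check that $\sigma$ is well defined by verifying that it carries the singularity moves (S2), (S3), (VS3) to sequences of flat Reidemeister moves on the smoothed links. By construction $\sigma$ sends $[\widetilde{K}^d_{glue}]$ to $[\widetilde{K}^d_{smooth}]$ and $[\widetilde{K}^0_{sing}]$ to $[\widetilde{K}^0_{link}]$, so that $\sigma(\mathbf{G}(K)) = \mathbf{S}(K)$ termwise. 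Producing the desired pair then reduces to finding two homotopic knots $K_1, K_2$ with $\mathbf{G}(K_1) - \mathbf{G}(K_2)$ nonzero but lying in $\ker\sigma$; equivalently, two flat singular virtual knots that are inequivalent but whose double-point smoothings yield equivalent flat virtual links.

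I would exhibit such a pair explicitly (the candidate pair $K_1, K_2$ of Figure~\ref{knotpair} is a natural place to start, reusing the fact that their distinguishing crossings already cancel at the level of $\mathbf{p}_t$). Equality $\mathbf{S}(K_1) = \mathbf{S}(K_2)$ would be shown by checking that the relevant smoothed flat links coincide, exactly as in the cancellation argument used for $\mathbf{S}$; this is a direct diagram comparison. The inequality $\mathbf{G}(K_1) \neq \mathbf{G}(K_2)$ is the crux: since the two formal sums differ only in terms that smooth to the same flat link, no invariant factoring through $\sigma$ can separate them, and I would instead invoke the based matrix invariant of singular flat virtual knots with one double-point (the extension of Turaev's invariant for virtual strings announced in the abstract). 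Computing this based matrix on the two glued singular flat knots and showing the resulting based matrices are inequivalent certifies $\mathbf{G}(K_1) \neq \mathbf{G}(K_2)$. The main obstacle is precisely this last step: one must confirm that the based matrix invariant genuinely detects the difference between singular flat knots that become equivalent after smoothing, and carry out the (finite but delicate) based-matrix computation for the chosen example, since this is the only point where information strictly finer than $\mathbf{S}$ must be extracted.
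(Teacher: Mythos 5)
Your part (1) is correct and is exactly the paper's argument: the paper disposes of it in one sentence (``follows from the universality of $\mathbf{G}$''), and your instantiation of the universality formula with $V=\mathbf{S}$ and $K_0=K_j$ fills in the details properly, since $\mathbf{S}$ is indeed a degree one Vassiliev invariant with values in an abelian group. Your smoothing homomorphism $\sigma$ with $\sigma(\mathbf{G}(K))=\mathbf{S}(K)$ is a clean conceptual framing that the paper leaves implicit, and your overall strategy for part (2) --- an explicit homotopic pair together with a based matrix invariant extended to flat singular virtual knots --- is the paper's strategy.

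There are, however, two genuine gaps in part (2). First, your candidate example cannot work: the pair of Figure~\ref{knotpair} was introduced precisely to prove $\mathbf{S}(K_1)\neq\mathbf{S}(K_2)$ (Proposition~\ref{smooth_strong}), so it violates the required hypothesis $\mathbf{S}(K_1)=\mathbf{S}(K_2)$; cancellation at the level of $\mathbf{p}_t$ is strictly weaker than cancellation at the level of $\mathbf{S}$, and for that pair the latter fails by construction. The paper instead uses the pair of Figure~\ref{strong} (knots shown homotopically non-trivial by Turaev), homotopic via two (CC) moves at crossings $3$ and $4$, with writhe zero so all $[\widetilde K_{sing}^0]$ terms cancel in $\mathbf{G}$, and with $\mathbf{S}=0$ on both knots because smoothing at crossings $1$ and $2$ (respectively $3$ and $4$) yields identical flat links carrying opposite signs. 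Second, the step you flag as the crux --- confirming that the based matrix invariant ``genuinely detects the difference'' --- is not a finite delicate computation alone but requires the machinery of Section~\ref{SBMsection}: one must define singular based matrices with the extra singularity switch move $N$, prove that homotopic singular virtual strings have homologous SBMs (Theorem~\ref{homo}), and, crucially, prove the classification of primitive SBMs within a homology class (Theorem~\ref{annoyingthm}: two homologous primitive SBMs differ by an isomorphism, possibly composed with a single $D_{12}$, $D_{21}$ or $N$ move). Without that uniqueness-type theorem, exhibiting two non-isomorphic primitive SBMs for the knots glued at crossings $3$ and $4$ does not allow you to conclude that the SBMs are non-homologous, hence does not certify that the corresponding terms of $\mathbf{G}(K_1)-\mathbf{G}(K_2)$ fail to cancel. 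Your proposal correctly locates the difficulty but supplies neither a valid example nor the detection machinery, and these constitute the bulk of the paper's actual proof.
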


The first result simply follows from the universality of $\mathbf{G}$. To prove the second result, we extend Turaev's based matrix invariant of flat virtual knots to the class of flat singular virtual knots with one double-point. We shall leave this task for section~\ref{SBMsection}.

\section{Virtual Strings and Their Matrices}\label{SBMsection}

\subsection{Virtual strings revisited}

In~\cite{Turaev}, Vladimir Turaev introduced the notion of virtual strings. The results Turaev proved for virtual strings are particularly useful in our setting as they directly correspond to flat virtual knots. Virtual strings are defined using arrow diagrams rather than knot diagrams. Thus, it is not surprising that their correspondence to flat virtual knots is analogous to the correspondence between Gauss diagrams and virtual knots. The difference is that Turaev's virtual strings contain arrows that aren't signed, and the direction of the arrow has a new meaning. To be more precise, we consider the following definitions.

\begin{Def}[Turaev] \begin{enumerate} 
\item For an integer $m\geq 0$, a \emph{virtual string $\alpha$ of rank m} is an oriented circle $S$, called the \emph{core circle} of $\alpha$, and a distinguished set of $2m$ distinct points of $S$ partitioned into ordered pairs. 
\item These $m$ ordered pairs of points are called \emph{arrows} of $\alpha$, and the collection of all arrows of $\alpha$ is denoted $arr(\alpha)$. 
\item The endpoints $a,b\in S$ of an arrow $(a,b)\in arr(\alpha)$ are called its \emph{tail} and \emph{head}, respectively.
\item Two virtual strings are \emph{homeomorphic} if there is an orientation-preserving homeomorphism of the core circles, transforming the set of arrows of the first string into the set of arrows of the second string. By abuse of language, we will refer to homeomorphism classes of virtual strings as virtual strings.
\end{enumerate}\end{Def}

Given a virtual string $\alpha$, we can realize the corresponding flat virtual knot diagram as follows. Each arrow in the string is associated with a crossing in the knot diagram. Passing through the head of an arrow while traveling along the oriented core circle $S$ is equivalent to passing through the strand of a crossing indicated by the letter $b$ in the diagram below. Passing through the tail of an arrow while traveling along the core circle $S$ is equivalent to passing through a crossing on the strand $a$, pictured below.\\

\begin{figure}[h]
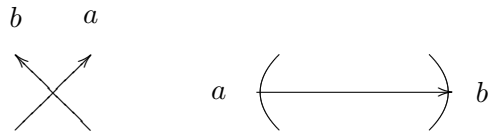

\[
\xy
(-20,-5)*{}; (-10, 5)*{} **\crv{}; ?(0)*\dir{>}+(0, 5)*{a};
(-10,-5)*{}; (-20, 5)*{} **\crv{}; ?(0)*\dir{>}+(0, 5)*{b};
(15,-5)*{}; (15, 5)*{} **\crv{(10,0)};
(35, -5)*{}; (35, 5)*{} **\crv{(40,0)};
{\ar@{->} (12,0)*{}; (38,0)*{}};
(15,0)*{}; ?(0)*\dir{}+(-5, 0)*{a};
(35,0)*{}; ?(0)*\dir{}+(30, 0)*{b};
\endxy
\]
\caption{A flat crossing and its corresponding string arrow}\label{arrow}
\end{figure}

In Figure~\ref{Kishino}, we gave a diagram for Kishino's knot. In Figure~\ref{KishinoFlat}, we showed the corresponding flat virtual knot. Here, we picture the Gauss diagram for the virtual knot in Figure~\ref{Kishino} and the virtual string that corresponds to the flat virtual knot in Figure~\ref{KishinoFlat}.

\def\KGauss{
\begin{xy} /r15mm/:
,{\ellipse<>{}}
,(1.173648, .984808)="1" ,*+!D{+}
,(.826352,.984808)="2" ,*+!D{-}
,(1.173648, -.984808)="a1" ,*+!U{+}
,(.826352,-.984808)="a2" ,*+!U{-}
,(1.984808, .173648)="3"
,(.015192, .173648)="a3" 
,(1.984808, -.173648)="4"
,(.015192, -.173648)="a4"
,{\ar@{<-} "1"; "a3"}
,{\ar@{->} "2"; "a4"}
,{\ar@{->} "3"; "a1"}
,{\ar@{<-} "4"; "a2"}
\end{xy}
}

\def\KString{
\begin{xy} /r15mm/:
,{\ellipse<>{}}
,(1.173648, .984808)="1" 
,(.826352,.984808)="2" 
,(1.173648, -.984808)="a1" 
,(.826352,-.984808)="a2" 
,(1.984808, .173648)="3"
,(.015192, .173648)="a3" 
,(1.984808, -.173648)="4"
,(.015192, -.173648)="a4"
,{\ar@{<-} "1"; "a3"}
,{\ar@{<-} "2"; "a4"}
,{\ar@{->} "3"; "a1"}
,{\ar@{->} "4"; "a2"}
\end{xy}
}

\begin{figure}[h]
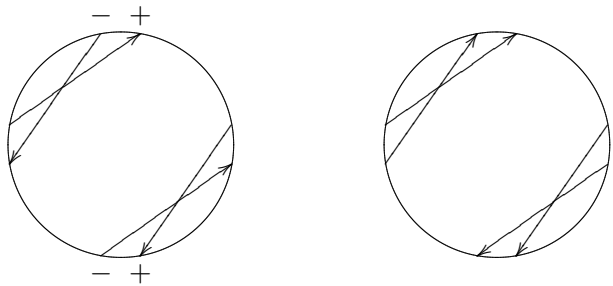

$$\KGauss\hspace{20mm}\KString$$
\caption{Gauss diagram for Kishino's knot and its corresponding virtual string}\label{GaussString}
\end{figure}

There is another notion of equivalence that is useful for virtual strings, namely \emph{homotopy}. The equivalence relation that defines the homotopy of virtual strings is designed to model the Reidemeister-type equivalence relation of flat virtual knots. While homeomorphisms of virtual strings correspond to ambient isotopies of knot diagrams, homotopy moves for virtual strings correspond to the moves for flat virtual knots that (i) add a kink in the knot, (ii) push two strands of the knot past each other to introduce 2 new double-points, and (iii) push a strand of the knot past a double-point.\\

Let us discuss how the moves (i), (ii) and (iii) translate to virtual strings. Suppose $\alpha$ is a virtual string with core circle $S$. We choose two points $a,b\in S$ such that the arc $ab$ is disjoint from $arr(\alpha)$. The move (i) adds an arrow $(a,b)$ to $arr(\alpha)$.\\

 Now choose two arcs on $S$ that are disjoint from $arr(\alpha)$, the first with endpoints $a$ and $a'$ and the second with endpoints $b$ and $b'$ (in any order). The move (ii) adds two new arrows, $(a,b)$ and $(b',a')$. \\
 
 Finally, let $a,a',b,b',c,c'\in S$ such that $(a',b),(b',c),(c',a)$ are arrows of $\alpha$ and the arcs $aa',bb',cc'$ are all disjoint from other arrows. The move (iii) replaces $(a',b),(b',c),(c',a)$ with $(a,b'),(b,c'),(c,a')$. \\

We have seen that homotopy classes of virtual strings can be thought of as equivalence classes of flat virtual knots. Thus, homotopy invariants of virtual strings will prove to be quite useful in studying flat virtual knots. In particular, invariants of virtual strings will help us to evaluate the virtual knot invariants introduced in the last section. One particularly useful invariant virtual strings is introduced in the next section.

\subsection{Based matrices}

In~\cite{Turaev}, Vladimir Turaev introduces objects called based matrices and their homology classes. To each virtual string, Turaev associates one of these matrices. He showed that if two virtual strings are homotopic, then their associated based matrices are homologous. Let's recall Turaev's definitions from~\cite{Turaev}.\\

\begin{Def} A \emph{based matrix} is a triple $(G, s, b: G\times G \rightarrow H)$. Here, $H$ is an abelian group, $G$ is a finite set, and $s \in G$. The map $b$ is skew-symmetric, i.e. for all $g,h\in G$, $b(g,h)=-b(h,g)$.  \end{Def}

\begin{Def} \begin{enumerate}
\item An element $g \in G -  \left\{s\right\} $ is \emph{annihilating} if $b(g, h)=0$ for all $h \in G$.
\item An element $g \in G -  \left\{s\right\} $ is a \emph{core element} if $b(g, h)=b(s,h)$ for all $h \in G$.
\item Two elements $g_1, g_2 \in G-  \left\{s\right\}$ are \emph{complementary} if $b(g_1,h)+b(g_2,h)=b(s,h)$ for all $h \in G$.
\end{enumerate}
\end{Def}

There are three operations related to these special elements that can be applied to based matrices. These operations, called \emph{elementary extensions}, are as follows.

\begin{enumerate}
\item $M_1$ transforms $(G,s,b)$ into the based matrix $(G_1=G\amalg \{g\},s, b_1)$ such that $b_1:G_1 \times G_1 \rightarrow H$ extends $b$ and $b_1(g,h)=0$ for all $h\in G_1$.
\item $M_2$ transforms $(G,s,b)$ into the based matrix $(G_2=G\amalg \{g\},s, b_2)$ such that $b_2:G_2 \times G_2 \rightarrow H$ extends $b$ and $b_2(g,h)=b_2(s,h)$ for all $h\in G_2$.
\item $M_3$ transforms $(G,s,b)$ into the based matrix $(G_3=G\amalg \{g_i,g_j\},s, b_3)$ such that $b_3:G_3 \times G_3 \rightarrow H$ is any skew-symmetric map extending $b$ with $b_3(g_i,h)+b_3(g_j,h)=b_3(s,h)$ for all $h\in G_3$.
\end{enumerate}

Given the definitions of the three elementary extensions, we can say what it means for a based matrix to be primitive. We can also define Turaev's two notions of equivalence for based matrices.

\begin{Def} \begin{enumerate} 
\item A based matrix is \emph{primitive} if it cannot be obtained from another matrix by an elementary extension. 
\item Two based matrices $(G, s, b)$ and $(G',s',b')$ are \emph{isomorphic} if there is a bijection $G\rightarrow G'$ sending $s$ to $s'$ and transforming $b$ into $b'$.
\item Two based matrices are \emph{homologous} if one can be obtained from the other by a finite number of elementary extensions $M_1^{\pm 1},M_2^{\pm 1},M_3^{\pm 1}$.
\end{enumerate} \end{Def}

Let us discuss the third definition. Since being homologous is an equivalence relation, it is natural to look at the equivalence classes it produces. Turaev proved in~\cite{Turaev} that each of these equivalence classes has a canonical representative, namely the unique (up to isomorphism) primitive based matrix contained in the class. 

\begin{Lemma}[Turaev] Every based matrix is obtained from a primitive based matrix by elementary extensions. Two homologous primitive based matrices are isomorphic.\end{Lemma}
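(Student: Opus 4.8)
The plan is to read the three elementary extensions $M_1,M_2,M_3$ backwards, as \emph{reduction} rules: a based matrix $(G,s,b)$ admits a reduction exactly when it contains an annihilating element, a core element, or a complementary pair, and the reduction deletes the offending element (or pair) and restricts $b$ to the surviving set. By the very definition, a matrix is then primitive precisely when it admits no reduction, and the homology relation is nothing other than the equivalence relation generated by the symmetric closure of this reduction rewriting system, since each $M_i^{-1}$ is a reduction and each $M_i$ its inverse. I would carry out the whole argument on isomorphism classes of based matrices, which is harmless because any isomorphism carries annihilating, core, and complementary elements to elements of the same type, so reductions are well defined up to isomorphism.

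First I would dispatch the existence half of the Lemma. Each extension enlarges $G$ (by one element for $M_1,M_2$, by two for $M_3$), so each reduction strictly decreases the finite quantity $|G|\geq 1$. Hence there is no infinite descending chain of reductions, the system is terminating, and iterating reductions from any matrix must halt at one admitting none, i.e.\ at a primitive matrix. This is exactly Part 1.

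The real content is the uniqueness assertion, and here I would appeal to Newman's Lemma: a terminating rewriting system has unique normal forms as soon as it is \emph{locally confluent}. So the heart of the matter is to show that whenever a matrix $N$ admits two reductions, with results $Y_1$ and $Y_2$, these can be reduced further to a common matrix up to isomorphism. I would organize the verification by how the deleted sets $R$ and $R'$ meet. If $R\cap R'=\emptyset$ the reductions commute: deletion preserves the annihilating/core/complementary status of any untouched element, because the defining conditions are universally quantified over $h$ and $s\notin R$, so each reduction survives the other and both orders reach $N-R-R'$. The nontrivial overlaps are when a single-element reduction shares its element with a complementary pair, and when two complementary pairs share exactly one element (the case $R=R'$ being trivial). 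In the first, after deleting an annihilating (resp.\ core) element $g$ belonging to a complementary pair $\{g,g''\}$, the partner $g''$ becomes a core (resp.\ annihilating) element and may then be deleted, reconciling the two reductions. In the second, if $\{g,g_1\}$ and $\{g,g_2\}$ are both complementary then subtracting the defining identities forces $b(g_1,h)=b(g_2,h)$ for all $h$, so $N-\{g,g_1\}$ and $N-\{g,g_2\}$ are already isomorphic via the renaming $g_1\mapsto g_2$.

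With local confluence in hand, Newman's Lemma yields a unique primitive normal form (up to isomorphism) for every based matrix, and the standard consequence for confluent terminating systems is that two matrices are related by the symmetric closure of reduction, that is, are homologous, if and only if their normal forms are isomorphic. A primitive matrix is its own normal form, so two homologous primitive matrices must be isomorphic, which is Part 2. I expect the main obstacle to be precisely this local-confluence case analysis: checking that deletion never destroys the special status of the remaining elements, and handling the overlapping complementary-pair configurations where the common reduct is reached only up to isomorphism rather than literally. Once the rewriting-theoretic framework is set up, the remainder is bookkeeping.
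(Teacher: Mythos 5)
Your proposal is correct, and the case analysis is complete: the only nontrivial peaks are the ones you list, since reductions delete sets of size one or two, $s$ is never deleted, and the annihilating/core/complementary conditions are universally quantified, so disjoint reductions commute on the nose. One small sharpening of your overlap case: if $g$ is annihilating and $\{g,g''\}$ is complementary, then $b(g'',h)=b(s,h)$ holds for all $h$ already in $N$ itself, so $g''$ is core \emph{before} either deletion (and dually for core $g$); this makes the join immediate rather than something that ``becomes'' true after deleting $g$.

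As for comparison: this paper does not actually prove the lemma---it is quoted from Turaev's paper on virtual strings---but the key step of Turaev's argument is recalled later, in the proof of Theorem~\ref{annoyingthm}: any subword $M_j^{-1}\circ M_i$ can be replaced by an isomorphism or by $M_l\circ M_k^{-1}$, and an induction then reorders an arbitrary homology into (extensions)$\circ$(isomorphism)$\circ$(inverse extensions), which collapses to an isomorphism when both endpoints are primitive. Read with the intermediate matrix as a peak $X\leftarrow N\rightarrow Y$ of two reductions, Turaev's exchange statement is \emph{exactly} your local confluence, and his verification amounts to the same three overlap cases you check (including the two-pairs-sharing-an-element case, where subtracting the defining identities gives $b(g_1,h)=b(g_2,h)$ for all $h$ and the reducts agree only up to the renaming $g_1\mapsto g_2$---hence the need to work modulo isomorphism, which you correctly set up by noting that isomorphisms preserve the three special types). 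The only genuine difference is the global assembly: you invoke Newman's Lemma on the terminating system (termination from $|G|$ strictly decreasing), whereas Turaev runs the explicit reordering induction on the zigzag. Your packaging is cleaner and makes the ``unique normal form'' conclusion automatic; Turaev's explicit normal form for homology sequences is what this paper later needs and generalizes in Theorem~\ref{annoyingthm}, where the extra move $N$ spoils uniqueness and the primitive representative is only determined up to a single $D_{12}$, $D_{21}$, or $N$ move---a refinement that the bare Newman-style statement would not deliver without redoing the peak analysis for the enlarged move set.
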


This lemma is particularly useful because it allows us to extend any isomorphism invariant of based matrices to a homology invariant of based matrices. To be precise, the value of the invariant on a based matrix is defined to be the value of the invariant on a homologous primitive based matrix. For example, one of the simplest of these invariants is the number $\# (G)$, the cardinality of any primitive based matrix $G$ in a given homology class.\\

We mentioned at the beginning of the section that based matrices can be associated to virtual strings. Suppose we have a virtual string $\alpha$. Let $G=G(\alpha)$ be the set $\{s\}\amalg arrow(\alpha)$. The map $b=b(\alpha):G\times G \rightarrow \mathbf{Z}$ is defined via intersection indices of certain curves obtained from $\alpha$. These intersection indices can be computed combinatorially. First, given $e\in arrow(\alpha)$, $b(e,s)$ is obtained by smoothing the flat virtual knot associated to $\alpha$ in the direction of the orientation at the crossing corresponding to $e$. Then $b(e,s)$ is the intersection index, $i(e)$, of the right-hand curve with the left-hand curve of the smoothed pair, see Figure~\ref{right}. \\

\begin{figure}[h]
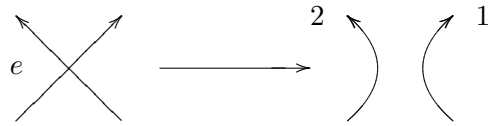

\[
\xy
(-24,-7)*{}; (-10, 7)*{} **\crv{}; \POS?(.5)*{}="y"; ?(1)*\dir{>}+(-14, -7)*{e};
(-10,-7)*{}; "y" **\crv{};
"y"; (-24, 7)*{} **\crv{}; ?(0)*\dir{>}+(-2, -2)*{};
{\ar@{->}(-5,0)*{}; (15,0)*{}}; ?(.75)*\dir{-}+(-5, 3)*{};
(20,-7)*{}; (20, 7)*{} **\crv{(28,0)}; ?(1)*\dir{>}+(-2, 3)*{}; ?(1)*\dir{>}+(-4,0)*{2};
(34, -7)*{}; (34, 7)*{} **\crv{(26,0)}; ?(1)*\dir{>}+(-2, 3)*{}; ?(1)*\dir{>}+(4,0)*{1};
\endxy
\]
\caption{The ordered flat virtual link obtained from smoothing at $e$}\label{right}
\end{figure}

For $e,f \neq s$ in $G$, the number $b(e,f)$ requires a bit more explanation. Viewing $e$ and $f$ as arrows in $\alpha$, suppose $e=(a,b)$ and $f=(c,d)$. Let $(ab)^{\circ}$ be the interior of the arc $ab$ and let $(cd)^{\circ}$ be the interior of the arc $cd$. Define $ab\cdot cd$ as the number of arrows of $\alpha$ with tails in $(ab)^{\circ}$ and heads in $(cd)^{\circ}$ minus the number of arrows with tails in $(cd)^{\circ}$ and heads in $(ab)^{\circ}$. Now in Figure~\ref{linking}, we illustrate what it means for $e$ and $f$ to be linked positively and negatively as well as unlinked. Let $\epsilon =1$ if $f$ links $e$ positively, $\epsilon = -1$ if $f$ links $e$ negatively and $\epsilon =0$ if $e$ and $f$ are unlinked. Finally, we define $b(e,f)=ab\cdot cd+\epsilon$.\\

\def\LinkPos{
\begin{xy} /r15mm/:
,{\ellipse<>{}}
,(2,0)="1" ,*+!L{e}
,(1,-1)="2"
,(0,0)="a1" 
,(1,1)="a2" ,*+!D{f}
,{\ar@{->} 0; "1"}
,{\ar@{<-} (1,1); (1, -1)}
\end{xy}
}

\def\LinkNeg{
\begin{xy} /r15mm/:
,{\ellipse<>{}}
,(2,0)="1" ,*+!L{e}
,(1,-1)="2"
,(0,0)="a1" 
,(1,1)="a2" ,*+!D{f}
,{\ar@{->} 0; "1"}
,{\ar@{->} (1,1); (1, -1)}
\end{xy}
}

\def\LinkNone{
\begin{xy} /r15mm/:
,{\ellipse<>{}}
,(1.173648, .984808)="1" ,*+!D{e}
,(.826352,.984808)="2"
,(1.173648, -.984808)="a1" 
,(.826352,-.984808)="a2" ,*+!U{f}
,{\ar@{->} "a1"; "1"}
,{\ar@{->} "2"; "a2"}
\end{xy}
}

\begin{figure}[h]
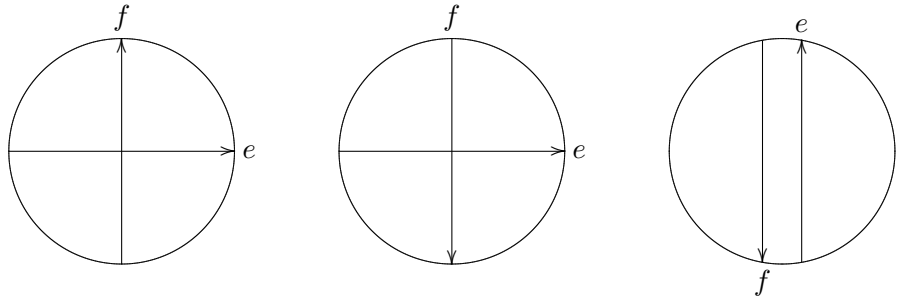

$$\LinkPos\hspace{10mm}\LinkNeg\hspace{10mm}\LinkNone$$
\caption{$f$ links $e$ positively, $f$ links $e$ negatively, and $f$ and $e$ are unlinked}\label{linking}
\end{figure}

As mentioned above, associating matrices to virtual strings in this fashion is a well-defined operation. In~\cite{Turaev}, Turaev proves the following result.

\begin{Lemma}[Turaev] If two virtual strings are homotopic, then their based matrices are homologous. \end{Lemma}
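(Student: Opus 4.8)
The plan is to verify the statement on a generating set for the homotopy relation. Since the based matrix $(G(\alpha),s,b(\alpha))$ depends only on the homeomorphism class of $\alpha$, any orientation-preserving homeomorphism of core circles induces a bijection $G(\alpha)\to G(\alpha')$ fixing $s$ and carrying $b(\alpha)$ to $b(\alpha')$; such strings therefore have \emph{isomorphic}, hence homologous, based matrices. It then suffices to check that each of the three elementary homotopy moves (i), (ii), (iii) alters the based matrix only by an elementary extension $M_1^{\pm1},M_2^{\pm1},M_3^{\pm1}$ or by an isomorphism. I would treat the moves in turn, and in each case the task reduces to comparing the new values $b(e,s)=i(e)$ and $b(e,f)=ab\cdot cd+\epsilon$ against the combinatorial definitions of annihilating, core, and complementary elements.

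First I would handle move (i), which adds a single arrow $e=(a,b)$ along an arc $ab$ meeting no other arrow. Since $(ab)^{\circ}$ contains no endpoints, the count $ab\cdot cd$ vanishes for every other arrow $f=(c,d)$, and the adjacency of $a$ and $b$ forces $\epsilon=0$, so $b(e,f)=0$; moreover smoothing at $e$ produces a small empty loop, whence $i(e)=b(e,s)=0$. Thus $e$ is an \emph{annihilating} element, and the new based matrix is exactly the result of applying $M_1$ to the old one.

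Next, move (ii) adds two arrows $e=(a,b)$ and $f=(b',a')$ obtained by pushing two arc segments across one another. Here I would show that $e$ and $f$ are \emph{complementary}, i.e. $b(e,h)+b(f,h)=b(s,h)$ for every $h\in G$. The key point is that the two new crossings cut the core circle into pieces so that, for any arrow $h$, the contributions of $e$ and $f$ to the intersection and linking data of $h$ add up to the contribution of the entire core loop, which is precisely what $s$ records; the boundary cases $h=s,e,f$ then reduce to the single identity $i(e)+i(f)=0=b(s,s)$ together with the forced value of $b(e,f)$. Since the complementary relation is exactly the defining constraint of $M_3$, the new based matrix is an $M_3$ extension of the old one, hence homologous to it.

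The remaining and most delicate case is move (iii), which replaces $(a',b),(b',c),(c',a)$ by $(a,b'),(b,c'),(c,a')$ without changing the number of arrows. No extension is created here; instead I would exhibit the natural bijection matching the three old arrows with the three new ones (fixing all other arrows and $s$) and verify that it transforms $b$ into $b'$. This is where the real work lies: one must track how relocating heads and tails across the three disjoint arcs $aa',bb',cc'$ affects every count $ab\cdot cd$, every linking sign $\epsilon$, and every intersection index $i(e)$. Because the move is supported in a region traversed only by the three distinguished strands, the global intersection data of every other arrow is untouched, and a careful case analysis of the nine relevant pairwise terms should show that the matrix is preserved up to isomorphism. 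I expect this bookkeeping for move (iii) to be the main obstacle; once it is complete, invariance under all three generators, together with the invariance of homology classes under isomorphism, establishes that homotopic virtual strings have homologous based matrices.
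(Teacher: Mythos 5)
Your overall strategy is the right one and is essentially Turaev's (the paper itself does not reprove this lemma; it cites~\cite{Turaev}, and the same move-by-move scheme reappears in the paper's proof of the singular analogue, Theorem~\ref{homo}): reduce to the generating moves (i)--(iii), show (i) and (ii) produce elementary extensions, and show (iii) preserves the based matrix under the natural bijection. Your treatment of move (ii) as creating a complementary pair (an $M_3$-extension) and your plan for move (iii) are correct in outline, though the move (iii) bookkeeping --- which is where most of the actual verification lives --- is deferred rather than carried out.

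The genuine gap is in move (i): you have handled only one of the two possible orientations of the added arrow. You assume the empty arc runs from the tail $a$ to the head $b$, in which case $(ab)^{\circ}$ contains no endpoints, every count $ab\cdot cd$ vanishes, $\epsilon=0$, and the new element is annihilating, giving an $M_1$-extension --- all correct. But the first flat Reidemeister move also adds the curl of the opposite type, where the empty arc runs from the \emph{head} to the \emph{tail}. In that case the tail-to-head arc is the long arc containing all other arrow endpoints, so the counts $ab\cdot cd$ do not vanish; instead one finds $b(e,h)=b(s,h)$ for all $h$, i.e.\ the new element is a \emph{core} element, and the move is an $M_2$-extension, not an $M_1$-extension. (Note that $b(e,s)=0$ in both cases, since core elements also satisfy $b(g,s)=b(s,s)=0$, so your intersection-index computation does not distinguish the two cases; the difference shows up in the rows $b(e,f)$.) This is precisely why the paper's proof of Theorem~\ref{homo} states that move (i) corresponds to $\widetilde M_1$ \emph{or} $\widetilde M_2$ ``depending on the direction of the arrow that is added,'' and why $M_2$-extensions appear in the definition of homologous at all. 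As written, your argument proves invariance under only half of the kink moves; since the opposite curl is not derivable from the one you treated together with moves (ii) and (iii), the missing case must be computed explicitly for the proof to close.
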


\subsection{Singular based matrices}

To help distinguish equivalence classes of flat singular virtual knots with one double-point, we extend the definition of based matrix introduced by Turaev. Below, we define singular based matrices and their homology classes. 

\begin{Def}  A \emph{singular based matrix}, or SBM, is a quadruple $(G, s, d, b: G\times G \rightarrow H)$. Here, $G$ is a finite set with $s$ and $d$ elements of $G$. The map $b$ is skew-symmetric, i.e. for all $g,h\in G$, $b(g,h)=-b(h,g)$.  \end{Def}

\begin{Def} \begin{enumerate}
\item An element $g \in G -  \left\{s, d\right\} $ is \emph{annihilating} if $b(g, h)=0$ for all $h \in G$.
\item An element $g \in G -  \left\{s, d\right\} $ is a \emph{core element} if $b(g, h)=b(s,h)$ for all $h \in G$.
\item Two elements $g_1, g_2 \in G-  \left\{s, d\right\}$ are \emph{complementary} if $b(g_1,h)+b(g_2,h)=b(s,h)$ for all $h \in G$.
\item We call a distinguished element $g\in\left\{s,d\right\}$ \emph{annihilating-like} if $b(g,h)=0$ for all $h\in G$.
\item We call $d$ \emph{core-like} if $b(d,h)=b(s,h)$ for all $h\in G$.
\end{enumerate}
\end{Def}

As in Turaev's definition, we introduce operations on SBMs. Three of these, $\widetilde M_1,\widetilde M_2,\widetilde M_3$ are \emph{elementary extensions}, and a fourth operation, $N$, is a \emph{singularity switch}. The operations are defined as follows.
\begin{enumerate}
\item $\widetilde M_1$ transforms $(G,s,d,b)$ into the SBM $(G_1=G\amalg \{g\},s,d, b_1)$ such that $b_1:G_1 \times G_1 \rightarrow H$ extends $b$ and $b_1(g,h)=0$ for all $h\in G_1$.
\item $\widetilde M_2$ transforms $(G,s,d,b)$ into the SBM $(G_2=G\amalg \{g\},s,d, b_2)$ such that $b_2:G_2 \times G_2 \rightarrow H$ extends $b$ and $b_2(g,h)=b_2(s,h)$ for all $h\in G_2$.
\item $\widetilde M_3$ transforms $(G,s,d,b)$ into the SBM $(G_3=G\amalg \{g_i,g_j\},s, d, b_3)$ such that $b_3:G_3 \times G_3 \rightarrow H$ is any skew-symmetric map extending $b$ with $b_3(g_i,h)+b_3(g_j,h)=b_3(s,h)$ for all $h\in G_3$.
\item Suppose there exists an element $g \in G$ such that $b(g,h)+b(d,h)=b(s,h)$ for all $h\in G$ (i.e., $g$ and $d$ are complementary). Then $N$ transforms $(G,s,d,b)$ into $(G,s,g,b)$. In effect, the roles of $g$ and $d$ are switched. \end{enumerate}

We will denote by $\widetilde M_i^{-1}$ the inverse operation for $\widetilde M_i$. We can see that, for $i=1,2$, $\widetilde M_i^{-1}$ and $\widetilde M_i$ are in fact only inverses up to isomorphism, where isomorphism has the meaning given below. What's more, $\widetilde M_3^{-1}\circ \widetilde M_3$ may not even be equivalent to an isomorphism. 

\begin{Def} \begin{enumerate} 
\item Two SBMs $(G, s,d, b)$ and $(G',s',d',b')$ are \emph{isomorphic} if there is a bijection $G\rightarrow G'$ sending $s$ to $s'$, $d$ to $d'$ and transforming $b$ into $b'$.
\item A SBM, $(G,s,d,b)$, is \emph{primitive} if it cannot be obtained from another SBM by an elementary extension, even after applications of the singularity switch operation.
\item Two SBMs are \emph{homologous} if one can be obtained from the other by a finite number of elementary extensions $\widetilde M_1^{\pm 1},\widetilde M_2^{\pm 1},\widetilde M_3^{\pm 1}$ and $N$ moves.
\end{enumerate}
\end{Def}

Now that the notion of isomorphism has been explicitly defined, we are equipped to show that $N$ has the following useful property.

\begin{Prop}
The singularity switch, $N$, is its own inverse, up to an isomorphism.
\end{Prop}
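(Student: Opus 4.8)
The plan is to show that applying $N$ twice returns an SBM isomorphic to the original. First I would unpack what a single application of $N$ requires and does. By definition, $N$ is applicable to $(G,s,d,b)$ precisely when there exists an element $g\in G$ complementary to $d$, i.e.\ satisfying $b(g,h)+b(d,h)=b(s,h)$ for all $h\in G$. When this holds, $N$ produces $(G,s,g,b)$, leaving $G$, $s$, and $b$ untouched and only changing the distinguished singular element from $d$ to $g$.

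Next I would verify that $N$ is applicable again to the result $(G,s,g,b)$, so that ``$N$ is its own inverse'' even makes sense. To apply $N$ to $(G,s,g,b)$ I need an element complementary to the new singular element $g$; the natural candidate is the original $d$. I would observe that the complementarity condition is \emph{symmetric} in $g$ and $d$: the defining equation $b(g,h)+b(d,h)=b(s,h)$ for all $h\in G$ is literally unchanged if we swap the roles of $g$ and $d$. Hence $d$ is complementary to $g$ in $(G,s,g,b)$, so $N$ applies, and the second application yields $(G,s,d,b)$, recovering the original SBM exactly (not merely up to isomorphism). I would then note that the ``up to isomorphism'' hedge in the statement is there to cover the possibility that the complementary element used in the second step is not forced to be the original $d$: if some other $g'\ne d$ is also complementary to $g$, then $N$ could return $(G,s,g',b)$, which need not equal $(G,s,d,b)$ on the nose but is isomorphic to it via the identity bijection on $G$ together with a relabeling argument.

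I would therefore structure the argument as follows: (1) record the applicability criterion and the effect of $N$; (2) prove the symmetry of the complementarity relation, which is the conceptual heart; (3) conclude that $d$ is an eligible choice for the second $N$, giving $N(N(G,s,d,b))=(G,s,d,b)$; and (4) address the isomorphism clause by noting that any two valid outputs of the second application are isomorphic, since they differ only in the choice of complementary element and the identity map on $G$ fixing $s$ transports $b$ to itself.

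The main obstacle I anticipate is not the algebra—symmetry of the complementarity equation is immediate—but rather correctly interpreting the quantifier ``up to isomorphism'' and confirming that the complementary element witnessing the second application can always be taken so that the composite is isomorphic to the identity. I would want to check carefully whether the definition permits a choice of complementary element other than $d$ and, if so, argue that all such choices produce isomorphic SBMs; this is where I expect to spend the most care, since it is the only place the statement's phrasing (``up to an isomorphism'' rather than ``exactly'') becomes relevant.
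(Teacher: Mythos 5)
Your overall structure tracks the paper's proof closely: apply $N$ twice, note that complementarity of $g$ and $d$ is symmetric so the second application is legal, and handle the case where the second switch selects some element $g'\neq d$. But the final step, as you state it, would fail. You claim that $(G,s,g',b)$ is isomorphic to $(G,s,d,b)$ ``via the identity bijection on $G$''; the identity cannot be the required isomorphism, because an isomorphism of SBMs must carry the distinguished singular element of the source to that of the target, i.e.\ it must send $g'$ to $d$, whereas the identity sends $g'$ to $g'$. The ``relabeling argument'' you defer is precisely the content of the paper's proof, and it is not automatic: one must exhibit a bijection $G\to G$ fixing $s$, sending $g'$ to $d$, and transforming $b$ into $b$.

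The missing computation is short but essential. If both $d$ and $g'$ are complementary to $g$, then $b(d,h)+b(g,h)=b(s,h)=b(g',h)+b(g,h)$ for all $h\in G$, hence $b(g',h)=b(d,h)$ for all $h\in G$; that is, $g'$ and $d$ agree with respect to $b$. This is what makes the transposition swapping $g'$ and $d$ (and fixing all other elements, in particular $s$) an isomorphism: for $h\notin\{g',d\}$ the swapped values of $b$ agree by the displayed equality, and at the pair itself one has $b(g',d)=b(d,d)=0=b(d,g')$ by skew-symmetry, so the swap preserves $b$ there too. This is exactly how the paper concludes: ``Because $g'$ and $d$ agree with respect to $b$, there is an isomorphism sending $g'$ to $d$ and $d$ to $g'$.'' With that computation inserted your argument becomes the paper's proof; without it, the isomorphism clause---which, as you yourself observe, is the only nontrivial part of the proposition---is asserted rather than proved.
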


\begin{proof}
Consider the sequence of moves $N\circ N$. The first instance of $N$ switches $d$ with a complementary element $g$, thus replacing the SBM $(G,s,d,b)$ with $(G,s,g,b)$. Now $g$ is complementary to $d$, so if $N$ is to switch $g$ with an element $g'$, then $g'$ must satisfy the property that $b(g',h)=b(d,h)$ for all $h\in G$. Because $g'$ and $d$ agree with respect to $b$, there is an isomorphism sending $g'$ to $d$ and $d$ to $g'$. Thus, $N\circ N$ is  an isomorphism.
\end{proof}

Turaev showed in~\cite{Turaev} that there is a unique primitive based matrix (up to isomorphism) in each homology class. While the same result doesn't generally hold for SBMs, we can prove a similar result about how the primitive SBMs are related to one another in a given homology class. Let us first consider the results that \emph{do} carry over from the based matrix setting to the SBM setting.

\begin{Lemma} Every SBM is obtained from a primitive SBM by a finite number of elementary extensions and singularity switches.\end{Lemma}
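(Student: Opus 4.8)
The plan is to adapt Turaev's analogous reduction argument for ordinary based matrices, where the essential tool is that the three special element types (annihilating, core, complementary pairs) are exactly what the inverse elementary extensions $\widetilde M_1^{-1}, \widetilde M_2^{-1}, \widetilde M_3^{-1}$ can remove. I would argue by induction on $\#(G)$, the cardinality of the finite set $G$ underlying a given SBM $(G,s,d,b)$. The base case is when the SBM is already primitive, where there is nothing to prove. For the inductive step, I would assume the SBM is \emph{not} primitive, meaning that by the definition of primitivity it can be obtained from some smaller SBM by an elementary extension, \emph{possibly after applications of $N$}.

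The key structural step is to translate non-primitivity into the existence of a removable element. First I would observe that if $(G,s,d,b)$ is non-primitive, then (after possibly applying a singularity switch $N$ to move the distinguished element $d$ into a complementary position) the set $G - \{s,d\}$ contains an element $g$ that is annihilating, or core, or belongs to a complementary pair $\{g_i,g_j\}$. In each of these three cases the corresponding inverse extension $\widetilde M_1^{-1}$, $\widetilde M_2^{-1}$, or $\widetilde M_3^{-1}$ applies and produces an SBM on a strictly smaller set, which by the induction hypothesis is obtained from a primitive SBM by elementary extensions and $N$ moves. Reading the chain of moves backwards — applying the forward elementary extension and the inverse $N$ moves — exhibits the original SBM as obtained from a primitive SBM by elementary extensions and singularity switches, completing the induction.

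The main obstacle I anticipate is handling the interaction between the singularity switch $N$ and the distinguished element $d$, since the SBM definition of primitivity explicitly allows elementary extensions ``even after applications of the singularity switch operation.'' In Turaev's original setting there is no $d$ and no $N$, so one only needs to locate a removable element among $G - \{s\}$; here the removable element must lie in $G - \{s,d\}$, and a special element may be hidden in the sense that it only becomes removable after $d$ is swapped with a complementary element via $N$. I would therefore need to argue carefully that the \emph{negation} of primitivity guarantees a removable element is reachable through some finite sequence of $N$ moves, and that applying $N$ does not change $\#(G)$ (so the induction on cardinality is not disturbed). The Proposition already established that $N \circ N$ is an isomorphism, which should let me assume without loss of generality that at most one $N$ move is interposed before each removal, keeping the bookkeeping finite.

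Finally I would record that elementary extensions strictly increase $\#(G)$ (by one for $\widetilde M_1, \widetilde M_2$ and by two for $\widetilde M_3$) while $N$ leaves $\#(G)$ fixed, so the inductive descent terminates and every SBM is reached from a primitive one in finitely many steps. I expect the write-up to be short, with the genuine content concentrated in verifying that non-primitivity yields a special element in $G - \{s,d\}$ up to an $N$ move; the rest is the routine backward-reading of the reduction chain.
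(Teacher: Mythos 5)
Your proposal is correct and is essentially the paper's own argument: the paper likewise reduces $\#(G)$ by greedily applying $\widetilde M_i^{-1}$ to remove annihilating elements, core elements, or complementary pairs, interposing an $N$ move when no elimination applies directly, and terminates because the cardinality strictly decreases at each elimination. Your inductive phrasing, and your explicit use of $N\circ N$ being an isomorphism to bound the $N$ moves needed before each removal, merely makes precise a point the paper leaves implicit; the substance is the same.
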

\begin{proof} Given a SBM $T=(G,s,d,b)$, we can reduce the cardinality of $G$ by eliminating any core elements, annihilating elements and complementary pairs with $\widetilde M_1^{- 1},\widetilde M_2^{- 1},\widetilde M_3^{- 1}$. If no such eliminations are possible, we can see if $N$ applies. If so, we perform the $N$ move and check if any $M_i^{-1}$ can be applied to the result. Proceeding in this fashion, since the size of $G$ is monotonically decreasing, the process must eventually terminate. The resulting SBM will clearly be primitive. \end{proof}

For the next theorem, we use the following shorthand. Let $D_{21}$ denote the sequence $\widetilde M_2^{-1}\circ N\circ \widetilde M_1$ that adds an annihilating element, exchanges it with $d$ (assuming $d$ is core-like), and removes the new core element. In essence, the move $D_{21}$ is the move that replaces a core-like $d$ by an annihilating-like $d$.  $D_{12}$ will denote the inverse operation $\widetilde M_1^{-1}\circ N\circ \widetilde M_2$ that adds a core element, exchanges it with $d$ and deletes the new annihilating element. Again, we simply view $D_{12}$ as a move that replaces an annihilating-like $d$ with a core-like $d$.\\

\begin{thm}\label{annoyingthm} Given two homologous primitive SBMs, the second can be obtained from the first by an isomorphism or a composition of an isomorphism with a single $D_{12}$, $D_{21}$ or $N$ move. \end{thm}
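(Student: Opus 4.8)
The plan is to analyze the structure of a homology class of SBMs by adapting Turaev's argument for ordinary based matrices, tracking how the two distinguished elements $s$ and $d$ behave. The key new feature compared to Turaev's setting is the element $d$ together with the singularity switch $N$, so the whole proof should be organized around a case analysis on the ``type'' of $d$ in a primitive SBM. The natural first step is to observe that in a primitive SBM, the element $d$ must be one of a small number of special types relative to $b$: since we cannot perform any $\widetilde M_i^{-1}$, the only ambiguity left in the class comes from whether $d$ is annihilating-like, core-like, or ``generic'' (complementary to some element, so that $N$ applies). So I would first classify primitive SBMs by the status of $d$.

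\emph{First I would} set up the reduction machinery precisely. Given a homology between two primitive SBMs $T$ and $T'$, I would factor the connecting sequence of moves $\widetilde M_i^{\pm 1}$ and $N$ and push all the extension moves ``outward,'' reducing to the question of comparing the two stable forms. The cleanest way is to appeal to Turaev's original uniqueness result for based matrices as a black box: if we \emph{forget} the distinguished element $d$ (treating the SBM as an ordinary based matrix $(G,s,b)$ plus a marked element), then Turaev's lemma already tells us that the underlying primitive based matrix is unique up to isomorphism. Thus $T$ and $T'$ have isomorphic \emph{underlying} primitive based matrices, and the only remaining freedom is where the marked element $d$ sits. This is the conceptual core: the theorem reduces to understanding how $d$ can be repositioned within a fixed primitive based matrix by the allowed moves.

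\emph{Next I would} carry out the case analysis. Within a fixed underlying primitive based matrix, the allowed repositionings of $d$ are: (i) do nothing (isomorphism); (ii) apply $N$ to switch $d$ with a complementary element $g$; or (iii) pass through a $D_{12}$ or $D_{21}$, which are precisely the moves that toggle $d$ between being core-like and annihilating-like via an auxiliary extension-and-switch. The content of the theorem is that these moves do not compose into anything genuinely new: a composition of several such repositionings is always equivalent, up to isomorphism, to a single one of $\{N, D_{12}, D_{21}\}$ or to the identity. I would verify this by checking that once $d$ is placed at a given ``type,'' any further $N$ move either returns it to an isomorphic configuration (using the proposition just proved that $N\circ N$ is an isomorphism) or, when $d$ is core-like/annihilating-like, is subsumed by a $D_{12}$ or $D_{21}$. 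The combinatorics here is a finite bookkeeping of how ``core-like,'' ``annihilating-like,'' and ``complementary'' interact.

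\emph{The hard part will be} handling the interaction between the $N$ moves and the extension moves in full generality — specifically, showing that when one reduces an arbitrary homology to primitive form on both ends, the $N$ moves genuinely commute (up to isomorphism) past the $\widetilde M_i^{\pm 1}$ so that they can be collected into at most one net repositioning of $d$. The subtlety is that $N$ requires a \emph{complementary} element to exist, and whether such an element is present can change as we add or delete elements via the extensions; so I must argue that any $N$ applied ``in the middle'' of a reduction can be replaced by an $N$, $D_{12}$, or $D_{21}$ applied to the reduced form, without creating new primitive types. I expect this to require careful tracking of the three special element types under each $\widetilde M_i^{\pm 1}$, and it is here that the definitions of annihilating-like and core-like elements (and the explicit descriptions of $D_{12}$ and $D_{21}$ as $\widetilde M_1^{-1}\circ N\circ \widetilde M_2$ and $\widetilde M_2^{-1}\circ N\circ \widetilde M_1$) will do the essential work.
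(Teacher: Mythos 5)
Your reduction via forgetting $d$ is sound as far as it goes: the $N$ move leaves the underlying based matrix $(G,s,b)$ unchanged and each $\widetilde M_i^{\pm 1}$ is an instance of $M_i^{\pm 1}$, so a homology of SBMs does induce a homology of underlying based matrices, and Turaev's uniqueness then gives isomorphic underlying primitive based matrices. But this is weaker than you claim, and the gap sits exactly where you flag ``the hard part.'' First, the marked element $d$ need not survive the underlying reduction at all: if $d$ is core-like or annihilating-like, the underlying based matrix of a \emph{primitive} SBM is not primitive, and the reduction to $P$ deletes $d$ itself, so ``where the marked element sits in $P$'' is not even defined in those cases. Second, and more seriously, in the generic case (both underlying matrices already primitive and isomorphic as based matrices), the isomorphism furnished by Turaev's lemma fixes $s$ but carries no information about $d$; to conclude that it sends $d$ to $d'$ or to an element complementary to $d'$ --- which is the entire content of the theorem --- you must track $d$ through the actual connecting sequence of moves. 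That tracking cannot be done statically inside $P$; it requires normalizing the sequence itself, i.e.\ showing every word in $\widetilde M_i^{\pm 1}$, $N$, and isomorphisms can be rewritten with all inverse extensions first, all extensions last, and at most one net $N$, $D_{12}$, or $D_{21}$ in the middle. That rewriting is the paper's proof, and it is genuinely case-heavy: the paper checks all nine configurations $\widetilde M_j^{-1}\circ N\circ \widetilde M_i$, plus the collapse of consecutive $D$/$N$ moves and the commutation of $D_{ij}$ with $\widetilde M_k^{\pm 1}$.

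Your stated expectation that ``the $N$ moves genuinely commute (up to isomorphism) past the $\widetilde M_i^{\pm 1}$'' is false in general, and this is precisely the step that would fail if executed as commutation. When $N$ involves elements created or destroyed by the adjacent extensions, it does not commute but \emph{transmutes}: for instance, the paper shows $\widetilde M_1^{-1}\circ N\circ \widetilde M_3$ must be replaced by $\widetilde M_1\circ D_{12}$ (a new $D$-move appears), and $\widetilde M_3^{-1}\circ N\circ \widetilde M_1$ collapses to $\widetilde M_2^{-1}\circ N$; only when $N$ is disjoint from the extensions does it slide past them, via Turaev's rewriting lemma $M_j^{-1}\circ M_i \rightsquigarrow M_l\circ M_k^{-1}$ (note the paper uses this rewriting lemma directly, not the uniqueness theorem as a black box). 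So your proposal is a reasonable outline whose organizing idea (quotient to the based-matrix level, then classify positions of $d$) is different from the paper's, but it does not discharge the main content: the finite ``bookkeeping'' you defer is not a routine verification but is, case for case, the whole proof.
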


\begin{proof} Let $P_{\bullet}$ and $P_{\bullet}'$ be two homologous primitive SBMs. Since they are homologous, $P_{\bullet}$ and $P_{\bullet}'$ are related by a sequence of isomorphisms, $\widetilde M_i^{\pm1}$ moves and $N$ moves. In~\cite{Turaev}, Turaev showed that for any $i,j\in\{1,2,3\}$, $M_j^{-1}\circ M_i$ can be replaced by an isomorphism or a sequence $M_l\circ M_k^{-1}$for some $k,l\in\{1,2,3\}$. The same result (with the same proof) holds if each $M_i$ is replaced by $\widetilde M_i$. Furthermore, an isomorphism composed with an elementary extension or singularity switch may be replaced by the elementary extension or singularity switch composed with an isomorphism in the reverse order.
Our strategy, then, will be to build on Turaev's results and show that the sequence of moves required to obtain $P_{\bullet}'$ from $P_{\bullet}$ can be replaced by a sequence with the following form. 

\begin{equation}\label{DesiredSequence}
(\widetilde M_i\text{ and }N\text{ moves})\circ (\text{one of }D_{12},D_{21}\text{ and }N)\circ (\text{isomorphisms})\circ (\widetilde M_i^{-1}\text{ and }N\text{ moves})
\end{equation}

Since $P_{\bullet}$ and $P_{\bullet}'$ are primitive, this would imply the theorem. Indeed if $P_{\bullet}$ is primitive, no sequence of $\widetilde M_i^{-1}$ and $N$ moves can be applied to $P_{\bullet}$. Similarly, $P_{\bullet}'$ cannot be obtained from a sequence of $\widetilde M_i$ and $N$ moves. So by the primitivity of the SBMs, any sequence that has the form stated above is in fact a sequence of the following form.

 $$(\text{one of }D_{12},D_{21}\text{ and }N)\circ (\text{isomorphisms})$$
 
Let us show that any sequence consisting entirely of $D_{ij}$ and $N$ moves can be replaced by a sequence containing at most one instance of $D_{ij}$ or $N$. Consider $D_{ij}\circ N$ where $i=1,j=2$, then $d$ must be a core-like element which $N$ switches with an annihilating element. After $D_{12}$, we have $d$ back to it's original (core) state, so the only change is that our original annihilating element has been replaced by a core element as a result of the first $N$ move. Thus, $D_{12}\circ N$can be replaced by $\widetilde M_2\circ \widetilde M_1^{-1}$. Similarly, $D_{21}\circ N$ can be replaced by $\widetilde M_1\circ \widetilde M_2^{-1}$, while $N\circ D_{ij}$ can also be replaced by $\widetilde M_j\circ \widetilde M_i^{-1}$. Let us further note that $D_{12}$ and $D_{21}$ are inverses while $N$ is its own inverse, up to isomorphism. For $i,j=1,2$, the sequence $D_{ij}\circ D{ij}$ cannot occur unless the core element is annihilating-like, in which case $i$ and $j$ are interchangeable. Thus, any sequence containing two or more consecutive instances of any of $D_{12},D_{21},N$ can be replaced by a sequence containing at most one of the three moves.\\

Now we turn to the interaction of $D_{ij}$ moves with $\widetilde M_k^{\pm1}$ moves. Note that in the sequence $\widetilde M_k^{-1}\circ D_{ij}$, the move $\widetilde M_k^{-1}$ can't involve elements added or deleted in $D_{ij}$, so the move is equivalent to $D_{ij}\circ \widetilde M_k^{-1}$. Similarly, $D_{ij}\circ \widetilde M_k$ is equivalent to $\widetilde M_k\circ D_{ij}$. Thus, $D_{ij}$ moves commute with $\widetilde M_1^{\pm 1},\widetilde M_2^{\pm 1},\text{ and }\widetilde M_3^{\pm 1}$.\\

It remains to investigate sequences of moves involving only elementary extensions and singularity switches. It is our goal to show that an arbitrary sequence involving these types of moves may be replaced by a sequence with the preferred ordering indicated in \eqref{DesiredSequence}. Since cases involving only elementary extensions have already been considered in Turaev's work, we consider strings of moves of the form $ \widetilde M_j^{-1}\circ N\circ \widetilde M_i$ that \emph{aren't} equivalent to $D_{12},D_{21}$ or an isomorphism. We would like to show that these sequences of moves can be replaced by sequences of the same or shorter length such that any inverse extensions are performed before any extensions. Thus by induction, we will have achieved our desired result.

\begin{itemize}
\item \emph{Case $i=1,j=1$.} Suppose that $N$ involves elements affected by both $\widetilde M_1$ and $\widetilde M_1^{-1}$ moves. Then $d$ must be core-like when $N\circ \widetilde M_1$ is applied. After these first two moves, we have a new core element coming from the old $d$, and the new $d$ is annihilating-like. But if $M_1^{-1}$ can be applied to our new core element, $s$ must be annihilating-like. Thus $M_1=M_2$, so the $\widetilde M_1^{-1}\circ N\circ \widetilde M_1$ sequence is equivalent to $D_{12}$, contradicting our assumption. Next, suppose that $N$ does not use the new element introduced by $\widetilde M_1$, then we can replace the sequence $\widetilde M_1^{-1}\circ N\circ \widetilde M_1$ by $\widetilde M_1^{-1}\circ \widetilde M_1\circ N$, which in turn can be replaced by $N$ (and possibly an isomorphism). Finally, suppose that $N$ doesn't involve the element removed by $\widetilde M_1^{-1}$.  Then $\widetilde M_1^{-1}$ commutes with $N$. So $\widetilde M_1^{-1}\circ N\circ \widetilde M_1$ can be replaced by $N\circ \widetilde M_1^{-1}\circ \widetilde M_1$, which in turn can be replaced by $N$.
\item \emph{Case $i=2,j=2$.} This is similar to the previous case.
\item \emph{Case $i=1,j=2$.} Since it is assumed that $\widetilde M_2^{-1}\circ N\circ \widetilde M_1$ isn't equivalent to $D_{21}$, $N$ can involve at most one of the elements of $G$ affected by the $\widetilde M_1$ and $\widetilde M_2^{-1}$ moves. So either $\widetilde M_1$ commutes with $N$ or $\widetilde M_2^{-1}$ commutes with $N$. If we are in the first situation, we can replace the sequence $\widetilde M_2^{-1}\circ N\circ \widetilde M_1$ by $\widetilde M_2^{-1}\circ \widetilde M_1\circ N$, which in turn can be replaced by $\widetilde M_l\circ \widetilde M_k^{-1}\circ N$ for some $k,l\in\{1,2,3\}$. In the second situation, $\widetilde M_2^{-1}\circ N\circ \widetilde M_1$ can be replaced by $N\circ \widetilde M_2^{-1}\circ \widetilde M_1$, which in turn can be replaced by $N\circ \widetilde M_l\circ\widetilde M_k^{-1}$ for some $k,l\in\{1,2,3\}$.
\item \emph{Case $i=2,j=1$.} This is similar to the previous case.
\item \emph{Case $i=1,j=3$.} Suppose $N$ involves elements from both $\widetilde M_1$ and $\widetilde M_3^{-1}$ moves. Then $d$ must be core-like and be switched with the new annihilating element. Furthermore, $\widetilde M_3^{-1}$ must delete a core element and an annihilating element. Because the end result is that we've switched an existing annihilating element with $d$ and deleted a core element, this sequence can be replaced by $\widetilde M_2^{-1}\circ N$. Now if $N$ does not involve elements from both $\widetilde M_1$ and $\widetilde M_3^{-1}$ moves, then $N$ commutes with one of the two other moves. So by Turaev's result, $\widetilde M_3^{-1}\circ N\circ\widetilde M_1$ can be replaced by $N\circ \widetilde M_l\circ \widetilde M_k^{-1}$ or $\widetilde M_l\circ\widetilde M_k^{-1}\circ N$ for some $k,l\in\{1,2,3\}$.
\item \emph{Case $i=2,j=3$.} This is similar to the previous case.
\item \emph{Case $i=3,j=1$.} Suppose $N$ involves elements from both $\widetilde M_3$ and $\widetilde M_1^{-1}$ moves. Then $\widetilde M_3$ must add a core element and an annihilating element and $d$ must be annihilating-like. After the sequence of moves, $d$ is core-like and there is an additional annihilating element. So the sequence $\widetilde M_1^{-1}\circ N\circ\widetilde M_3$ can be replaced by $\widetilde M_1\circ D_{12}$. If $N$ does not involve elements from both $\widetilde M_3$ and $\widetilde M_1^{-1}$ moves, then $N$ commutes with one of the two other moves. So by Turaev's result, $\widetilde M_1^{-1}\circ N\circ\widetilde M_3$ can be replaced by
$N\circ \widetilde M_l\circ \widetilde M_k^{-1}$ or $\widetilde M_l\circ\widetilde M_k^{-1}\circ N$ for some $k,l\in\{1,2,3\}$.
\item \emph{Case $i=3,j=2$.} This is similar to the previous case.
\item \emph{Case $i=3,j=3$.} Suppose $N$ involves elements from both $\widetilde M_3$ and $\widetilde M_3^{-1}$ moves. Then $\widetilde M_3$ adds a pair of complementary elements, one of which behaves like $d$ with respect to the $b$ map. After the switch, $d$ has $b$ values complementary to its original values and there are two elements in $G-\left\{s,d\right\}$ that have identical $b$ values equal to the original values of $d$. In order for $\widetilde M_3^{-1}$ to involve one of these elements, there must be another element $g$ such that $b(d,h)=b(g,h)$ for all $h\in G$. So after $\widetilde M_3^{-1}$ is applied, the result is the same as if a simple $N$ move had been performed. Thus $\widetilde M_3^{-1}\circ N \circ\widetilde M_3$ can be replaced by $N$. Once again, if $N$ does not involve elements from both $\widetilde M_3$ and $\widetilde M_3^{-1}$ moves, then $N$ commutes with one of the two other moves. So using Turaev's result, $\widetilde M_3^{-1}\circ N\circ\widetilde M_3$ can be replaced by $N\circ \widetilde M_l\circ \widetilde M_k^{-1}$ or $\widetilde M_l\circ\widetilde M_k^{-1}\circ N$ for some $k,l\in\{1,2,3\}$.\\
\end{itemize} 

We have shown that our sequence can be reordered into the form we required above, thus the proof is complete.
\end{proof}

\begin{Cor} In each equivalence class of SBMs, there is a unique primitive SBM or a pair of primitive SBMs, up to isomorphism.
\end{Cor}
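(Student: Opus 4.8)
The plan is to deduce the corollary directly from Theorem~\ref{annoyingthm} by a short combinatorial argument that classifies the primitive SBMs in a fixed homology class according to the behaviour of their distinguished element $d$. First I would observe that each homology class is nonempty and, by the preceding Lemma (every SBM reduces to a primitive one), contains at least one primitive SBM; so it suffices to prove that a single homology class contains at most two isomorphism classes of primitive SBMs. I would argue by contradiction, assuming a homology class contains three pairwise non-isomorphic primitive SBMs $P_1,P_2,P_3$. By Theorem~\ref{annoyingthm}, any two of them are related by an isomorphism composed with a single $D_{12}$, $D_{21}$, or $N$ move, and since they are pairwise non-isomorphic, each pair is in fact joined by one of these three moves.

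The key device is to assign to each primitive SBM a \emph{type} recording the status of $d$: type $C$ if $d$ is core-like, type $A$ if $d$ is annihilating-like, and type $O$ otherwise. I would then record how each move changes the type. By their definitions, $D_{21}$ sends type $C$ to type $A$ and $D_{12}$ sends type $A$ to type $C$. For $N$, a direct computation using $b(g,h)=b(s,h)-b(d,h)$ for the complementary element $g$ shows that $N$ sends $C$ to $A$, $A$ to $C$, and $O$ to $O$. Since isomorphisms preserve type, it follows that any single move relates two SBMs whose types are either $\{C,A\}$ or both $O$; in particular, no single move relates two SBMs of the same type $C$ or of the same type $A$. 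I would also record the small fact that $N$ is well-defined up to isomorphism: any two elements complementary to $d$ have identical $b$-values by skew-symmetry, hence are interchangeable by an isomorphism fixing everything else.

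The contradiction then follows from casework on the types $t_1,t_2,t_3$. If some $t_i=O$, then since type $O$ is adjacent only to type $O$, all three must be type $O$; but then each pair is joined by an $N$ move, so $P_2$ and $P_3$ are both isomorphic to $N(P_1)$, which is unique up to isomorphism, contradicting $P_2\not\cong P_3$. If instead all three types lie in $\{C,A\}$, the pigeonhole principle forces two of them to share a type, and two SBMs of the same type $C$ (or same type $A$) cannot be joined by a single move, again a contradiction. Hence there are at most two isomorphism classes of primitive SBMs in the class, which combined with the existence of at least one yields the statement that each class contains a unique primitive SBM or a pair of primitive SBMs up to isomorphism.

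I expect the real content to lie in verifying the type-transition table, especially the assertion that $N$ interchanges $C$ and $A$ rather than fixing them, together with the interchangeability of complementary elements; once those are in hand, the remaining graph-theoretic casework is routine bookkeeping on top of Theorem~\ref{annoyingthm}. A secondary point to check carefully is that the theorem's "composition with an isomorphism" does not disturb the argument, which holds precisely because isomorphisms preserve the core-like, annihilating-like, or neither classification of $d$.
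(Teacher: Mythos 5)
Your proof is correct and is essentially the paper's own argument: both hinge on the trichotomy that $d$ is core-like, annihilating-like, or neither, so that each of $D_{21}$, $D_{12}$, $N$ can move a primitive SBM only between the appropriate types ($C\to A$, $A\to C$, $O\to O$ respectively), whence Theorem~\ref{annoyingthm} leaves at most one non-isomorphic primitive partner in the class; your pigeonhole casework and the explicit check that $N$ is well-defined up to isomorphism (interchangeability of complementary elements) are a slightly more detailed packaging of the paper's direct observation that at most one of the three moves applies to a given primitive SBM. The one caveat, which the paper handles by the blanket assumption that $s$ is not annihilating-like (so that $D_{12}$ and $D_{21}$ are distinct), is the degenerate case $b(s,\cdot)=0$: there your types $C$ and $A$ coincide and the claim that no single move joins two SBMs of the same type fails literally, but since $D_{12}=D_{21}$ then acts trivially up to isomorphism on primitive SBMs, your contradiction survives with one extra sentence.
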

\begin{proof} Assuming that $D_{12}$ and $D_{21}$ are distinct operations (i.e., $s$ is not annihilating-like), it is impossible that more than one of the three moves $D_{12}$, $D_{21}$ or $N$ applies to a given primitive SBM, $P_{\bullet}$. This is because $d$ is either annihilating-like, core-like, or neither. Each of the moves can only be applied in one of these three situations. It may indeed happen that none of these operations applies to $P_{\bullet}$, rendering $P_{\bullet}$ the unique primitive SBM in its homology class (modulo isomorphism). Otherwise, the homology class of $P_{\bullet}$ contains a pair of primitive SBMs related by one of the three moves.\end{proof}

Now we return to the problem of distinguishing flat singular virtual knots with one double-point. First, we note that flat singular virtuals with one double-point can be viewed as virtual strings where one arrow is designated as the preferred arrow. This can be pictured by using a thickened arrow. We will refer to these modified virtual strings as \emph{singular virtual strings}. Naturally, the preferred arrow in a singular virtual string corresponds to the double-point in the flat singular virtual knot diagram. Therefore, the inverses of string moves (i) and (ii), which correspond to removing a kink with the Reidemeister 1 move, must not be allowed when one of the arrows involved is the preferred arrow. However, to model the flat version of the (S2) move pictured in Figure~\ref{SingEquiv}, we add a singular virtual string move that allows us to change which arrow is the preferred arrow in the diagram. Suppose $(a,b)$ is the preferred arrow in the diagram and let $(a',b')$ be an arrow in the diagram such that the interior of one of the arcs in the core circle $S$ with endpoints $a$ and $b'$ is disjoint from $arr(\alpha)$ and the interior of one of the arcs in $S$ with endpoints $a'$ and $b$ is disjoint from $arr(\alpha)$. Then the designation as the preferred arrow may switch from $(a,b)$ to $(a',b')$. Let us call this move (s-ii). Note that we still allow moves (i)-(iii) whenever they involve only ordinary arrows. We also allow move (iii), even if one of the arrows is the preferred one. The homotopy equivalence relation that results from this collection of moves corresponds precisely to the equivalence relation on flat singular virtual knots with one double point.\\

We may now use our theory of SBMs to define an invariant for singular virtual strings. Given a singular virtual string $\alpha$ with preferred arrow $(a,b)$, we let $G=G(\alpha)$ be the set $\{s\}\amalg arrow(\alpha)$ where $d$ in $(G,s,d,b)$ is precisely the preferred arrow $(a,b)$. The map $b$ is defined as it was for based matrices of strings, where $b(e,s)$ is the appropriate intersection index and $b(e,f)=e_1e_2\cdot f_1f_2-\epsilon$ where $s\neq e=(e_1,e_2), s\neq f=(f_1,f_2)$ and $\epsilon$ describes the linking of $e$ and $f$. We may picture the antisymmetric map $b$ as a matrix where the first row/column is the one corresponding to $s\in G$ and the last row/column is the one corresponding to $d$. Given this correspondence, we have the following result.

\begin{thm}\label{homo} If two singular virtual strings are homotopic, then their corresponding SBMs are homologous.
\end{thm}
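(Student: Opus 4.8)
The plan is to show that each elementary homotopy move on singular virtual strings induces on the associated SBM either an isomorphism, an elementary extension $\widetilde M_i^{\pm 1}$, or a singularity switch $N$; since each of these operations relates homologous SBMs, this will prove the theorem. Recall from the discussion preceding the statement that homotopy of singular virtual strings is generated by four families of moves: the moves (i), (ii), (iii) applied to ordinary (non-preferred) arrows; the move (iii) applied even when the preferred arrow is one of the three arrows involved; and the new move (s-ii), which transfers the preferred designation from one arrow to another. It therefore suffices to analyze these families one at a time.

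First I would treat the moves (i), (ii), (iii) involving only ordinary arrows. These are precisely Turaev's homotopy moves for virtual strings, carried out in the complement of the preferred arrow $d$. Turaev's lemma (quoted above) tells us that such a move changes the based matrix $(G,s,b)$ by a composition of $M_1^{\pm1}, M_2^{\pm1}, M_3^{\pm1}$ and isomorphisms. Because the preferred arrow $d$ is neither created, destroyed, nor displaced by these operations, and because the $b$-map is computed from the same local and global intersection data whether or not one arrow is marked, Turaev's computation applies verbatim: the same elements are added or removed with the same $b$-values, and for a newly created ordinary arrow $e$ the value $b(e,d)$ automatically satisfies the defining relation of the relevant extension, since $d$ is simply another element of $G$. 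Hence each such move induces the corresponding $\widetilde M_i^{\pm1}$ (possibly composed with an isomorphism), and the resulting SBMs are homologous.

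Next I would consider move (iii) when the preferred arrow is one of the three participating arrows. This move neither adds nor deletes arrows; it replaces the triple $(a',b),(b',c),(c',a)$ by $(a,b'),(b,c'),(c,a')$. Turaev shows that such a move yields an isomorphic based matrix via an explicit bijection of arrow sets fixing $s$ and matching up the three affected arrows. The remaining task is to check that this bijection carries the marked arrow $d$ to an arrow with the identical $b$-profile, so that the image may legitimately serve as the new preferred element. This amounts to a finite verification of the $b$-values on the three arrows before and after the move, after which the two SBMs are seen to be isomorphic and hence homologous.

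Finally, and this is where the new content lies, I would analyze the move (s-ii). Here $(a,b)$ is the preferred arrow, $(a',b')$ is an ordinary arrow, and there are arcs from $a$ to $b'$ and from $a'$ to $b$ whose interiors meet no endpoints of $arr(\alpha)$. The claim is that under these arc-disjointness hypotheses the elements $(a,b)$ and $(a',b')$ are complementary in the SBM, i.e. $b((a,b),h)+b((a',b'),h)=b(s,h)$ for every $h\in G$. Granting this, switching the preferred designation from $(a,b)$ to $(a',b')$ is exactly the singularity switch $N$, so the two SBMs are homologous. The main obstacle, and the computational heart of the proof, is establishing this complementarity: one must verify the identity separately for $h=s$ (comparing the intersection indices of the curves obtained by smoothing) and for $h$ an ordinary arrow (comparing the off-diagonal values $ab\cdot cd$ together with their linking corrections $\epsilon$), in each case using that the two empty arcs force $(a,b)$ and $(a',b')$ to meet every other arrow in complementary fashion. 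I expect this to mirror Turaev's proof that the pair of arrows introduced by move (ii) is complementary, with the adjacency conditions of (s-ii) playing the role of the empty-arc conditions in move (ii).
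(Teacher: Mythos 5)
Your proposal is correct and follows essentially the same route as the paper: a move-by-move check that (i) induces $\widetilde M_1$ or $\widetilde M_2$, (ii) induces $\widetilde M_3$, (iii) leaves the SBM unchanged (even when the preferred arrow participates), and (s-ii) induces the singularity switch $N$. Your identification of the complementarity of $(a,b)$ and $(a',b')$ under the empty-arc hypotheses as the content behind the (s-ii)$\leftrightarrow N$ correspondence is exactly right---the (s-ii) configuration matches the tail-to-head adjacency pattern of the pair created by move (ii), so Turaev's complementarity computation carries over as you anticipate---and you in fact supply more detail here than the paper, which simply asserts this step.
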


\begin{proof} To show that the proposition holds, it suffices to check how applying moves (i)-(iii) and (s-ii) to a singular virtual string effects the corresponding SBMs. First, since $b$ is defined for singular virtual strings as it was for virtual strings, performing move (i) on a singular virtual string corresponds to applying $\widetilde M_1$ or $\widetilde M_2$ to the corresponding SBM, depending on the direction of the arrow that is added. Similarly, performing move (ii) on a singular virtual string precisely changes the corresponding SBM by an $\widetilde M_3$ transformation. Move (iii) applied to a singular virtual string does not effect the corresponding SBM, regardless of whether the (iii) move involves the preferred arrow. Finally, applying (s-ii) to a singular virtual string changes the corresponding SBM by a singularity switch, that is, by the transformation $N$. 
\end{proof}

Now that we have a tool for distinguishing singular virtual strings (i.e. flat singular virtual knots with one double-point), we can prove that the gluing invariant for virtual knots is stronger than the smoothing invariant. This proof will involve computations of values of the map $b$ for several SBMs. These computations will help to make the definition we introduced in the previous section more concrete.

\subsection{Proof of the second assertion of Theorem~\ref{stronger}}

To prove the second assertion of Theorem~\ref{stronger}, we must produce a pair of virtual knots $K_1$ and $K_2$ such that the smoothing invariant $\mathbf{S}$ gives the same value on the two knots, while the gluing invariant $\mathbf{G}$ gives two distinct values. In our proof, we let $K_1$ and $K_2$ be the knots pictured in Figure~\ref{strong}. Note that these knots are homotopic to one another, related by 2 (CC) moves applied to crossings 3 and 4. Thus, by showing that they are different using $\mathbf{G}$, we illustrate that their difference doesn't lie simply in the theory of homotopic virtual knots, but in their differences on the level of virtual knots. Of course, these same examples may also be distinguished from the unknot by $\mathbf{G}$ and not $\mathbf{S}$, but $K_1$ and $K_2$ were shown to be in non-trivial homotopy classes by Turaev in~\cite{Turaev}. Thus, they are both distinct from the unknot for more fundamental reasons. Let us now proceed to our proof.

To show that $\mathbf{S}(K_1)=\mathbf{S}(K_2)=0$, note that the link obtained by smoothing at crossing 1 in $K_1$ is exactly the same as the link obtained by smoothing at crossing 2. Likewise, the link obtained by smoothing at crossing 3 is the same link as the one obtained by smoothing at crossing 4. To show these links are equivalent, we need only apply Reidemeister moves (1) and (V1). Since crossings 1 and 2 have opposite signs and 3 and 4 have opposite signs, all terms in $\mathbf{S}(K_1)$ cancel. The same argument shows that $\mathbf{S}(K_2)=0$, so $\mathbf{S}(K_1)=\mathbf{S}(K_2)$.

\begin{figure}[h]
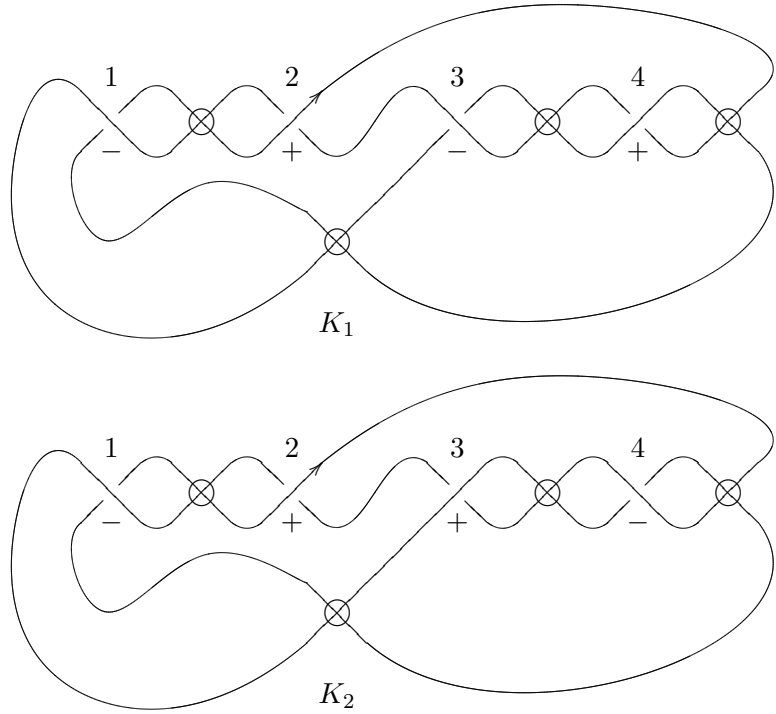

\[
\xy
(-50, -4)*{}="a1"; (-42,4)*{}="a2"; (-50,4)*{}="a3"; (-42,-4)*{}="a4";
(-38, -4)*{}="b1"; (-30,4)*{}="b2"; (-38,4)*{}="b3"; (-30,-4)*{}="b4";
(-26, -4)*{}="c1"; (-18,4)*{}="c2"; (-26,4)*{}="c3"; (-18,-4)*{}="c4";
(-4, -4)*{}="d1"; (4,4)*{}="d2"; (-4,4)*{}="d3"; (4,-4)*{}="d4";
(8, -4)*{}="e1"; (16,4)*{}="e2"; (8,4)*{}="e3"; (16,-4)*{}="e4";
(20, -4)*{}="f1"; (28,4)*{}="f2"; (20,4)*{}="f3"; (28,-4)*{}="f4";
(32, -4)*{}="g1"; (40,4)*{}="g2"; (32,4)*{}="g3"; (40,-4)*{}="g4";
(-20, -20)*{}="h1"; (-12,-20)*{}="h2"; (-20,-12)*{}="h3"; (-12,-12)*{}="h4";
"a3"; "a4" **\crv{}; \POS?(.5)*{\hole}="a"; ?(1)*\dir{}+(-4, 10)*{1}; ?(1)*\dir{}+(0, -4)*{-};
"a1"; "a" **\dir{-};
"a"; "a2" **\dir{-};
"c1"; "c2" **\crv{}; \POS?(.5)*{\hole}="c"; ?(1)*\dir{>}+(-4, 2)*{2}; ?(1)*\dir{}+(0, -4)*{+};
"c3"; "c" **\dir{-};
"c"; "c4" **\dir{-};
"d3"; "d4" **\crv{}; \POS?(.5)*{\hole}="d"; ?(1)*\dir{}+(-4, 10)*{3}; ?(1)*\dir{}+(0, -4)*{-};
"d1"; "d" **\dir{-};
"d"; "d2" **\dir{-};
"f1"; "f2" **\crv{}; \POS?(.5)*{\hole}="f"; ?(1)*\dir{}+(-4, 2)*{4}; ?(1)*\dir{}+(0, -4)*{+};
"f3"; "f" **\dir{-};
"f"; "f4" **\dir{-};
"b1"; "b2" **\dir{-};
"b3"; "b4" **\dir{-};
(-34,0)*{\bigcirc};
"e1"; "e2" **\dir{-};
"e3"; "e4" **\dir{-};
(12,0)*{\bigcirc};
"g1"; "g2" **\dir{-};
"g3"; "g4" **\dir{-};
(36,0)*{\bigcirc};
"h1"; "h4" **\dir{-};
"h2"; "h3" **\crv{}; \POS?(.5)*{}="h"; ?(1)*\dir{}+(4, -15)*{K_1};
(-16,-16)*{\bigcirc};
"a2"; "b3" **\crv{(-40,5.5)};
"b2"; "c3" **\crv{(-28,5.5)};
"a4"; "b1" **\crv{(-40,-5.5)};
"b4"; "c1" **\crv{(-28,-5.5)};
"d2"; "e3" **\crv{(6,5.5)};
"e2"; "f3" **\crv{(18,5.5)};
"d4"; "e1" **\crv{(6,-5.5)};
"e4"; "f1" **\crv{(18,-5.5)};
"f2"; "g3" **\crv{(30,5.5)};
"f4"; "g1" **\crv{(30,-5.5)};
"c4"; "d3" **\crv{(-15,-6)&(-11,0)&(-7,6)};
"h4"; "d1" **\crv{};
"c2"; "g2" **\crv{(-8,12) & (12,20) & (48, 10)};
"h2"; "g4" **\crv{(6,-36)&(52,-20)};
"a1"; "h3" **\crv{(-53,-6)&(-48,-24)& (-34,-2)&(-21,-12)};
"a3"; "h1" **\crv{(-54,8)&(-60,0)&(-60,-28)&(-34,-32)};
\endxy
\]
\[
\xy
(-50, -4)*{}="a1"; (-42,4)*{}="a2"; (-50,4)*{}="a3"; (-42,-4)*{}="a4";
(-38, -4)*{}="b1"; (-30,4)*{}="b2"; (-38,4)*{}="b3"; (-30,-4)*{}="b4";
(-26, -4)*{}="c1"; (-18,4)*{}="c2"; (-26,4)*{}="c3"; (-18,-4)*{}="c4";
(-4, -4)*{}="d1"; (4,4)*{}="d2"; (-4,4)*{}="d3"; (4,-4)*{}="d4";
(8, -4)*{}="e1"; (16,4)*{}="e2"; (8,4)*{}="e3"; (16,-4)*{}="e4";
(20, -4)*{}="f1"; (28,4)*{}="f2"; (20,4)*{}="f3"; (28,-4)*{}="f4";
(32, -4)*{}="g1"; (40,4)*{}="g2"; (32,4)*{}="g3"; (40,-4)*{}="g4";
(-20, -20)*{}="h1"; (-12,-20)*{}="h2"; (-20,-12)*{}="h3"; (-12,-12)*{}="h4";
"a3"; "a4" **\crv{}; \POS?(.5)*{\hole}="a"; ?(1)*\dir{}+(-4, 10)*{1}; ?(1)*\dir{}+(0, -4)*{-};
"a1"; "a" **\dir{-};
"a"; "a2" **\dir{-};
"c1"; "c2" **\crv{}; \POS?(.5)*{\hole}="c"; ?(1)*\dir{>}+(-4, 2)*{2}; ?(1)*\dir{}+(0, -4)*{+};
"c3"; "c" **\dir{-};
"c"; "c4" **\dir{-};
"d1"; "d2" **\crv{}; \POS?(.5)*{\hole}="d"; ?(1)*\dir{}+(-4, 2)*{3}; ?(1)*\dir{}+(0, -4)*{+};
"d3"; "d" **\dir{-};
"d"; "d4" **\dir{-};
"f3"; "f4" **\crv{}; \POS?(.5)*{\hole}="f"; ?(1)*\dir{}+(-4, 10)*{4}; ?(1)*\dir{}+(0, -4)*{-};
"f1"; "f" **\dir{-};
"f"; "f2" **\dir{-};
"b1"; "b2" **\dir{-};
"b3"; "b4" **\dir{-};
(-34,0)*{\bigcirc};
"e1"; "e2" **\dir{-};
"e3"; "e4" **\dir{-};
(12,0)*{\bigcirc};
"g1"; "g2" **\dir{-};
"g3"; "g4" **\dir{-};
(36,0)*{\bigcirc};
"h1"; "h4" **\dir{-};
"h2"; "h3" **\crv{}; \POS?(.5)*{}="h"; ?(1)*\dir{}+(4, -15)*{K_2};
(-16,-16)*{\bigcirc};
"a2"; "b3" **\crv{(-40,5.5)};
"b2"; "c3" **\crv{(-28,5.5)};
"a4"; "b1" **\crv{(-40,-5.5)};
"b4"; "c1" **\crv{(-28,-5.5)};
"d2"; "e3" **\crv{(6,5.5)};
"e2"; "f3" **\crv{(18,5.5)};
"d4"; "e1" **\crv{(6,-5.5)};
"e4"; "f1" **\crv{(18,-5.5)};
"f2"; "g3" **\crv{(30,5.5)};
"f4"; "g1" **\crv{(30,-5.5)};
"c4"; "d3" **\crv{(-15,-6)&(-11,0)&(-7,6)};
"h4"; "d1" **\crv{};
"c2"; "g2" **\crv{(-8,12) & (12,20) & (48, 10)};
"h2"; "g4" **\crv{(6,-36)&(52,-20)};
"a1"; "h3" **\crv{(-53,-6)&(-48,-24)& (-34,-2)&(-21,-12)};
"a3"; "h1" **\crv{(-54,8)&(-60,0)&(-60,-28)&(-34,-32)};
\endxy
\]

\caption{Examples illustrating that $\mathbf{G}$ is strictly stronger than $\mathbf{S}$}\label{strong}
\end{figure}

Now let us consider $\mathbf{G}(K_1)$. Since the writhe of the knot is 0, i.e. the sum of the signs of all the crossings is 0, all of the terms of the form $[(\widetilde K_1)_{sing}^0]$ in $\mathbf{G}(K_1)$ cancel. We are left with four terms. Let us consider the terms that correspond to crossing 3 and crossing 4. Crossing 3 contributes $(-1)[(\widetilde K_1)_{glue}^3]$, where this flat singular virtual knot associated to ``gluing" $K_1$ at crossing 3 can be viewed as the singular virtual string pictured on the left in Figure~\ref{stringterms}. Likewise, crossing 4 contributes $(+1)[(\widetilde K_1)_{glue}^4]$, where the flat singular virtual knot associated to $(K_1)_{glue}^4$ is the singular virtual string pictured on the right in Figure~\ref{stringterms}. We now compute the SBMs associated to these two singular virtual strings to show that the strings are distinct and, therefore, their terms in $\mathbf{G}(K_1)$ do not cancel one another as they did in $\mathbf{S}(K_1)$.

\def\TwoStringOne{
\begin{xy} /r15mm/:
,{\ellipse<>{}}
,(1.173648, .984808)="1" ,*+!D{d}
,(.826352,.984808)="2" ,*+!D{3}
,(1.173648, -.984808)="a1" 
,(.826352,-.984808)="a2"
,(1.984808, .173648)="3"
,(.015192, .173648)="a3" ,*+!R{2}
,(1.984808, -.173648)="4"
,(.015192, -.173648)="a4" ,*+!R{1}
,{\ar@{=>} "a1"; "1"}
,{\ar@{<-} "2"; "a2"}
,{\ar@{->} "3"; "a3"}
,{\ar@{->} "4"; "a4"}
\end{xy}
}

\def\TwoStringTwo{
\begin{xy} /r15mm/:
,{\ellipse<>{}}
,(1.173648, .984808)="1" ,*+!D{3}
,(.826352,.984808)="2" ,*+!D{d}
,(1.173648, -.984808)="a1" 
,(.826352,-.984808)="a2"
,(1.984808, .173648)="3"
,(.015192, .173648)="a3" ,*+!R{2}
,(1.984808, -.173648)="4"
,(.015192, -.173648)="a4" ,*+!R{1}
,{\ar@{->} "a1"; "1"}
,{\ar@{<=} "2"; "a2"}
,{\ar@{->} "3"; "a3"}
,{\ar@{->} "4"; "a4"}
\end{xy}
}

\begin{figure}[h]
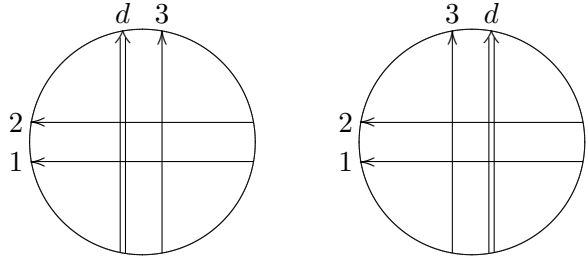

$$\TwoStringTwo\hspace{10mm}\TwoStringOne$$
\caption{Singular virtual strings associated to gluing $K_1$ at crossing 3 and gluing at crossing 4}\label{stringterms}
\end{figure}

Let $b_3$ be the skew-symmetric map in the SBM of the singular virtual string associated to crossing 3, as pictured on the left in Figure~\ref{stringterms}. (It may also be useful to think of this string as the flat singular virtual knot associated to gluing at crossing 3 in Figure~\ref{strong}.) By definition, $b_3(1,s)$ is the intersection index of the link obtained by smoothing at crossing 1 where we order the components so that the component on the right is component 1 and the component on the left is component 2. (Refer to Figure~\ref{Index} for the definition of intersection index and Figure~\ref{right} for an illustration of the ordering of components.) This index is easily seen to be -2, so $b_3(1,s)=-2$. Similar computations of intersection indices show that $b_3(2,s)=-2,$ $b_3(3,s)=2$ and $b_3(d,s)=2$. Next, let us consider $b_3(1,2)$. We will use the string notation $1=(a_1,b_1)$ for the arrow labeled 1 and $2=(a_2,b_2)$ for the arrow labeled 2. Then $b_3(1,2)=a_1b_1\cdot a_2b_2+\epsilon$. First of all, 1 and 2 are unlinked, so $\epsilon=0$. The quantity $a_1b_1\cdot a_2b_2$ is the number of arrows with tails in the interior of arc $a_1b_1$ and heads in the interior of arc $a_2b_2$ minus the number of arrows with tails in the interior of arc $a_2b_2$ and heads in the interior of arc $a_1b_1$. This quantity is 0-0=0. Thus, $b_3(1,2)=0$. To see a less trivial example, let us compute $b_3(1,3)$. Again, we use the notation $3=(a_3,b_3)$. Now, 3 links 1 negatively, so $\epsilon=-1$. Furthermore, the quantity $a_1b_1\cdot a_3b_3$ is $0-1=-1$, so $b_3(1,3)=a_1b_1\cdot a_3b_3+\epsilon=-1+(-1)=-2$. Proceeding in this way, we find that $b_3$ takes the following values: $b_3(1,d)=-3$, $b_3(2,3)=-1$, $b_3(2,d)=-2$, $b_3(3,d)=0$. We organize these values in a matrix as follows.

\begin{displaymath}
\begin{array}{cc}
&\begin{array}{ccccccc} &s   & 1  & 2  & 3  & d &\end{array}\\
\begin{array}{c} s\\ 1\\ 2\\ 3\\ d\end{array} &
\left[ \begin{array}{ccccc} 
0 & 2 & 2 & -2 & -2\\ 
-2 & 0 & 0 & -2 & -3\\
-2 & 0 & 0 & -1 & -2\\
2 & 2 & 1 & 0 & 0\\
2 & 3 & 2 & 0 & 0
\end{array}\right] 
\end{array}
\end{displaymath}

The computations of $b_4$ for the SBM of the singular virtual string associated to crossing 4 are similar. We represent them here in matrix form.

\begin{displaymath}
\begin{array}{cc}
&\begin{array}{ccccccc} &s   & 1  & 2  & 3  & d &\end{array}\\
\begin{array}{c} s\\ 1\\ 2\\ 3\\ d\end{array} &
\left[ \begin{array}{ccccc} 
0 & 2 & 2 & -2 & -2\\ 
-2 & 0 & 0 & -3 & -2\\
-2 & 0 & 0 & -2 & -1\\
2 & 3 & 2 & 0 & 0\\
2 & 2 & 1 & 0 & 0
\end{array}\right] 
\end{array}
\end{displaymath}

We note that these matrices are both primitive. Furthermore, $d$ is clearly not annihilating-like or core-like in either matrix, and the two matrices are not related by an isomorphism or an $N$ move. Thus, by Theorem~\ref{annoyingthm}, they are not homologous.  It follows from Proposition~\ref{homo}, then, that the two singular virtual strings in Figure~\ref{stringterms} are not homotopic. Thus, the terms in $\mathbf{G}(K_1)$ corresponding to crossings 3 and 4 do not cancel as they did in $\mathbf{S}(K_1)$.

We see that the terms in $\mathbf{G}(K_2)$ corresponding to crossings 3 and 4 in $K_2$ are the same as in $\mathbf{G}(K_1)$, except with opposite sign. Thus, $\mathbf{G}(K_1)-\mathbf{G}(K_2)$ is non-zero. Indeed this difference has two terms, one with coefficient +2 and one with coefficient -2. By the analysis above, these terms do not cancel and, hence, $\mathbf{G}(K_1)\neq\mathbf{G}(K_2)$. \emph{Q.E.D.}

\bibliographystyle{amsplain}
\bibliography{Preprint}

\end{document}